\newtheorem{theorem}{Theorem}[section]
\newtheorem{proposition}[theorem]{Proposition}
\newtheorem{corollary}[theorem]{Corollary}
\newtheorem{definition}[theorem]{Definition}
\newtheorem{claim}[theorem]{Claim}
\newtheorem{remark}[theorem]{Remark}
\newtheorem{problem}[theorem]{Problem}
\newcounter{Examplecount}
\newenvironment{example}[1][Example \arabic{Examplecount}.]{\begin{trivlist}
\item[\hskip \labelsep {\bfseries
#1}]}{\end{trivlist}\stepcounter{Examplecount}}
\newcommand\beq{\begin{equation}}
\newcommand\eeq{\end{equation}}
\newcommand\bce{\begin{center}}
\newcommand\ece{\end{center}}
\newcommand\bea{\begin{eqnarray}}
\newcommand\eea{\end{eqnarray}}
\newcommand\ba{\begin{array}}
\newcommand\ea{\end{array}}
\newcommand\ben{\begin{enumerate}}
\newcommand\een{\end{enumerate}}
\newcommand\bit{\begin{itemize}}
\newcommand\eit{\end{itemize}}
\newcommand\brr{\begin{array}}
\newcommand\err{\end{array}}
\newcommand\bt{\begin{tabular}}
\newcommand\et{\end{tabular}}
\newcommand\nn{\nonumber}
\newcommand\wt{\widetilde}
\renewcommand\S{{\mathcal S}}
\DeclareMathOperator\maj{maj} 
\newcommand\x{{\mathbf x}}
\newcommand\y{{\mathbf y}}
\newcommand{\bbz}{\mathbb{Z}}
\renewcommand\S{{\mathcal S}}
\newcommand\s{{\tt S}}
\newcommand\A{{\mathcal A}}
\newcommand\LL{{\mathcal L}}
\newcommand{\D}{{\rm{Des}}}
\newcommand{\inv}{{\rm{inv}}}
\newcommand{\des}{{\rm{des}}}
\newcommand{\Neg}{{\rm{Neg}}}
\newcommand{\nega}{{\rm{neg}}}
\newcommand{\fmaj}{{\rm{fmaj}}}
\newcommand{\fdes}{{\rm{fdes}}}
\newcommand{\sgn}{{\rm{sign}}}
\renewcommand{\O}{\mathcal{O}}
\newcommand{\As}{\A^s}   
\newcommand{\AB}{\A^B}
\begin{document}

\title{Signed arc permutations}


\author{Sergi Elizalde~\thanks{Department of Mathematics, Dartmouth College, Hanover, NH 03755, USA. {\tt sergi.elizalde@dartmouth.edu}.
Partially supported by NSF grant DMS-1001046.} \and Yuval
Roichman~\thanks{Department of Mathematics, Bar-Ilan University,
 Ramat-Gan 52900, Israel.  {\tt yuvalr@math.biu.ac.il}. }}


\maketitle

\begin{abstract}
Arc permutations, which were originally introduced in the study of triangulations and characters, have recently been shown to have interesting combinatorial properties.
The first part of this paper continues their study by providing signed enumeration formulas with respect to their descent set and major index.
Next, we generalize the notion of arc permutations to the hyperoctahedral group in two different directions.
We show that these extensions to type $B$ carry interesting analogues of the properties of type $A$ arc permutations,
such as characterizations by pattern avoidance, and elegant unsigned and signed enumeration formulas with respect to the flag-major index. 


\end{abstract}

\tableofcontents

\section{Introduction}

The enumeration of permutations taking into account their sign, usually referred to as {\em signed enumeration},
was studied for subsets of the symmetric group $\S_n$ in the seminal paper of Simion and Schmidt on pattern-avoiding
permutations~\cite{SiS}.
Among many other instances of sign enumeration in the literature,
we highlight an elegant formula for the signed descent number
enumerator
conjectured by Loday~\cite{Lod} and proved by D\'esarmenien and
Foata~\cite{DF} and by Wachs~\cite{Wachs}. Type $B$ analogues were
given later by Reiner~\cite{Rein}.

In analogy to MacMahon's well-known product formula enumerating permutations in $\S_n$ with respect to the major index,
a factorial-type formula for the signed major index enumerator on $\S_n$ was given by Gessel and Simion~\cite[Cor.~2]{Wachs}.
For generalizations to other groups, see~\cite{AGR, BRS, CGH, BZ,
BC, C}. In this paper we study signed major index enumerators and other related polynomials for arc permutations, both in the symmetric group
$\S_n$ and in the hyperoctahedral group $B_n$.

{\em Arc permutations} were introduced in \cite{TFT2} as a subset of the symmetric group. These permutations play an important role in the study of flip
graphs of polygon triangulations and associated affine Weyl group actions. It was shown in \cite{ER} that arc permutations can be characterized in terms of
pattern avoidance. A descent-set preserving map from
arc permutations to Young tableaux was constructed in~\cite{ER} to deduce a
conjectured character formula of Regev.

\medskip

In this paper we propose two different generalizations of the
notion of arc permutations to the hyperoctahedral group. These
generalizations, which we call {\em signed arc permutations} and
{\em $B$-arc permutations}, carry known properties of unsigned arc
permutations and reveal new ones. In particular, we give
characterizations of both generalizations by forbidden patterns
(see Theorems~\ref{signed_arc_pattern} and~\ref{B_arc_pattern}),
in analogy to the results from~\cite{ER} in the unsigned case.
Additionally, we show in Subsection~\ref{sec:another_approach}
that both unsigned arc permutations and $B$-arc permutations may
be characterized by their canonical expressions. This
characterization will be used to derive the signed and unsigned
flag-major index enumerators for $B$-arc permutations. In the case
of signed arc permutations, different tools are used to derive
similar formulas in Section~\ref{sec:signed-arc}.

For both generalizations of arc permutations to type $B$, we obtain nice product formulas for their
descent set enumerators (see Theorems~\ref{signed_Des_Neg_inv} and~\ref{Thm_AB_Des}).
Even though the descent set has a different distribution on these two definitions, 
it turns out that they both carry the same unsigned and signed flag-major index enumerators (see
Corollary~\ref{equi-B_vs_signed}). This surprising phenomenon deserves further study.

\section{Arc permutations in the symmetric group}

\subsection{Definition and basic properties}

We start by reviewing two definitions and a result from~\cite{ER}.
Recall that an interval of $\bbz_n$ is a set of the form
$\{a,a+1,\dots,b\}$ or $\{b,b+1,\dots,n,1,2,\dots,a\}$ where
$1\le a\le b\le n$.

\begin{definition}\label{def_arc} A permutation $\pi\in \S_n$ is an {\em arc permutation}
if, for every $1\le j\le n$, the first $j$ letters in $\pi$ form
an interval in $\bbz_n$. 
Denote by $\A_n$ the set of arc permutations in $\S_n$.

A permutation $\pi\in \A_n$ is {\em left-unimodal} 
if, for every $1\le j\le n$, the first 
$j$ letters in $\pi$ form an interval in
$\bbz$. Denote  by $\LL_n$ the set of left-unimodal permutations
in $\S_n$.
\end{definition}

\begin{example} We have that $12543\in\A_5$, but $125436\notin\A_6$, since
$\{1,2,5\}$ is an interval in $\bbz_5$ but not in $\bbz_6$.
\end{example}

It is easy to show~\cite{ER} that $|\A_n|=n2^{n-2}$ for $n\ge2$.
Arc permutations can be characterized in terms of pattern
avoidance, as those permutations avoiding the eight patterns
$\tau\in\S_4$ with $|\tau(1)-\tau(2)|=2$.

\begin{theorem}[\cite{ER}]\label{arc-pattern-thm}
$$\A_n=\S_n(1324,1342,2413,2431,3124,3142,4213,4231).$$
\end{theorem}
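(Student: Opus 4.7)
The plan is to prove the two set inclusions separately. Both directions rest on analyzing the prefix sets $\{\pi(1),\dots,\pi(j)\}$ and comparing them with cyclic intervals of $\bbz_n$.

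For the forward inclusion, I would argue by contradiction. Suppose $\pi\in\A_n$ contains one of the eight patterns at positions $i_1<i_2<i_3<i_4$. The common feature $|\tau(1)-\tau(2)|=2$ forces one of $\pi(i_3),\pi(i_4)$ to be a value $v$ lying strictly between $\pi(i_1)$ and $\pi(i_2)$ numerically, while the remaining value $w$ is either smaller or larger than both of $\pi(i_1),\pi(i_2)$. Writing $p=\min(\pi(i_1),\pi(i_2))$ and $q=\max(\pi(i_1),\pi(i_2))$, the prefix $\{\pi(1),\dots,\pi(i_2)\}$ contains $p$ and $q$ but not $v$. A standard interval containing $p,q$ must contain every integer in $[p,q]$, so the prefix is forced to be a wrap-around interval $\{\beta,\dots,n,1,\dots,\alpha\}$ with $p\le\alpha<v<\beta\le q$. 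But then $w<p$ places $w$ in $[1,\alpha]$, and $w>q$ places $w$ in $[\beta,n]$; either way $w$ lies in the prefix at step $i_2$, contradicting the fact that $w$ appears at a position greater than $i_2$.

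For the reverse inclusion I would prove the contrapositive. Let $j$ be the smallest index for which $\{\pi(1),\dots,\pi(j)\}$ fails to be a cyclic interval; then $I:=\{\pi(1),\dots,\pi(j-1)\}$ is a cyclic interval and $v:=\pi(j)$ is not cyclically adjacent to $I$. If $I$ is a wrap-around interval $\{b,\dots,n,1,\dots,a\}$, the non-adjacency gives $v\in[a+2,b-2]$, and both $a+1$ and $b-1$ lie outside $I\cup\{v\}$ and therefore appear at positions greater than $j$. Taking $i_1$ to be the position of $a$, $i_2=j$, and $(i_3,i_4)$ to be the positions of $a+1$ and $b-1$ in whichever order they occur, the four values have relative ranks $(1,3,2,4)$ or $(1,3,4,2)$, yielding pattern $1324$ or $1342$. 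If instead $I=[a,b]$ is a standard interval, then (up to the symmetry exchanging low and high) $v\in[b+2,n]$; taking $i_1$ to be the position of $b$, $i_2=j$, and $i_3$ the position of $b+1$ (which exceeds $j$ since $b+1\notin I\cup\{v\}$) produces three values of relative ranks $(1,3,2)$. The task then reduces to locating $i_4>i_3$ whose value is either less than $b$ or greater than $v$.

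The main obstacle is verifying the existence of this final position $i_4$ in the standard-interval case. The unplaced values after step $i_3$ lie in $[1,a-1]\cup([b+2,n]\setminus\{v\})$. In the generic subcase $a>1$ and $b<n$, some value in $[1,a-1]$ is remaining and appears at a position greater than $i_3$, giving $\pi(i_4)<b$ and pattern $2431$. In the boundary subcase $a=1$, the non-adjacency of $v$ forces $v\in[b+2,n-1]$, so $v+1$ remains unplaced, gives $\pi(i_4)>v$, and yields pattern $1324$. The symmetric treatment of the low side ($v\in[1,a-2]$) produces patterns $3124$ or $4231$. Putting the wrap-around case together with these four subcases of the standard case covers every way $\pi$ can fail to be an arc permutation, and in each instance produces one of the eight forbidden patterns.
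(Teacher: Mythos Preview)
The paper does not actually prove this theorem; it is stated as a citation from~\cite{ER}, so there is no in-paper proof to compare against. Your overall strategy---locating the first index where the cyclic-interval condition fails and exhibiting four witness entries---is the natural one.

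However, your execution of the reverse inclusion in the standard-interval case has a genuine gap. You fix $i_3$ to be the position of $b+1$ and then assert that, in the generic subcase $a>1$, ``some value in $[1,a-1]$ is remaining and appears at a position greater than $i_3$.'' This need not hold: such a value may appear strictly between $i_2=j$ and $i_3$. For instance, take $n=5$ and $\pi=[2,3,5,1,4]$. Then $I=\{2,3\}$, $j=3$, $v=5$, and $b+1=4$ sits at position $i_3=5$, while $a-1=1$ sits at position $4<i_3$; there is no admissible $i_4>5$. (The permutation does contain $2413$, but your construction does not locate it.) The same defect appears in your boundary subcase $a=1$: for $\pi=[1,2,4,5,3]$ one has $b+1=3$ at position $5$ and $v+1=5$ at position $4$, so again no $i_4>i_3$ exists.

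The repair is precisely what you already did in the wrap-around case: do not commit $b+1$ to the slot $i_3$. In the generic subcase $a>1$, let $i_3<i_4$ be the positions of $b+1$ and $a-1$ in whichever order they occur (both exceed $j$ since neither lies in $I\cup\{v\}$); the four values $b,v,a-1,b+1$ have relative ranks $2,4,1,3$ in some order at $i_3,i_4$, yielding $2413$ or $2431$. In the boundary subcase $a=1$, let $i_3<i_4$ be the positions of $b+1$ and $v+1$, yielding $1324$ or $1342$. With this adjustment your argument is complete; the forward inclusion is fine as written.
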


\subsection{Enumeration}

For a permutation $\pi\in \S_n$, recall the definition of its descent set
$$
\D(\pi):=\{i:\ \pi(i)>\pi(i+1)\},
$$
its major index
$$
\maj(\pi):=\sum\limits_{i\in \D(\pi)} i,
$$
and its inversion number
$$
\inv(\pi):=\#\{i<j:\ \pi(i)>\pi(j)\}.
$$
For a set $D=\{i_1,\dots,i_k\}$ denote $\x^{D}=x_{i_1}\cdots
x_{i_k}$.

\begin{theorem}\label{thm:A-invDes}
For every $n\ge2$,
$$
\sum_{\pi\in\A_n} t^{\inv(\pi)} \x^{\D(\pi)}=
\prod_{i=1}^{n-1}(1+t^i x_i)+
\sum_{j=1}^{n-2}\left((t^{j(n-j)}x_j+t^{n-j-1}x_{j+1})\prod_{i=1}^{j-1}(1+t^i
x_i)\prod_{i=j+2}^{n-1}(1+t^{n-i} x_i)\right).
$$
\end{theorem}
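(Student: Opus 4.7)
The plan is to partition $\A_n$ into the left-unimodal permutations $\LL_n$ and the non-left-unimodal arc permutations, and show that the two pieces contribute precisely the two summands in the formula.

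For the LU piece, the idea is to exploit the recursive construction: each $\pi \in \LL_n$ is built by adjoining either the new minimum or the new maximum of the current prefix (in $\bbz$, not just in $\bbz_n$). Adjoining the new maximum creates no descent and no inversion, while adjoining the new minimum at step $i+1$ creates a descent at position $i$ and exactly $i$ new inversions. Consequently $\inv(\pi)=\maj(\pi)$ on $\LL_n$, and the descent set gives a bijection $\LL_n \leftrightarrow 2^{[n-1]}$, yielding $\sum_{\pi\in\LL_n} t^{\inv(\pi)}\x^{\D(\pi)} = \prod_{i=1}^{n-1}(1+t^i x_i)$, which is the first term of the formula.

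For $\pi \in \A_n \setminus \LL_n$, I would parametrize by the ``wrap step'': let $j+1$ be the first index for which $I_{j+1}$ fails to be an interval in $\bbz$. Since $1 \le j \le n-2$ and $I_j$ is an interval in $\bbz$ containing exactly one of $\{1,n\}$, exactly one of two cases holds: (a) $I_j = \{1,\dots,j\}$ and $\pi(j+1) = n$, or (b) $I_j = \{n-j+1,\dots,n\}$ and $\pi(j+1) = 1$. The pre-wrap $\pi(1)\cdots\pi(j)$ is then an LU permutation of a size-$j$ interval, recorded by a descent set $D_1 \subseteq [j-1]$; the post-wrap $\pi(j+2)\cdots\pi(n)$ is built by repeatedly removing an endpoint of the remaining complement (an interval in $\bbz$ of size $n-j-1$), recorded by the set $D_3 \subseteq \{j+2,\dots,n-1\}$ of steps at which the current maximum is taken. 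A short case check identifies the ``core'' descent forced by the wrap as $c = j+1$ in case (a) and $c = j$ in case (b), so $\D(\pi) = D_1 \cup \{c\} \cup D_3$, and block-by-block accounting gives $\inv(\pi) = \maj(D_1) + c' + \sum_{p\in D_3}(n-p)$, where $c' = n-j-1$ in case (a) (from $n$ preceding all $n-j-1$ post-wrap entries) and $c' = j(n-j)$ in case (b) (from $1$ being less than the $j$ pre-wrap entries, plus the $j(n-j-1)$ pairs between pre-wrap and post-wrap). Summing over $D_1$ and $D_3$ then produces exactly $\bigl(t^{j(n-j)}x_j + t^{n-j-1}x_{j+1}\bigr)\prod_{i=1}^{j-1}(1+t^i x_i)\prod_{i=j+2}^{n-1}(1+t^{n-i}x_i)$, the $j$-th summand.

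The main technical point to verify is the post-wrap inversion count: an ``R'' move at step $k$ within the post-wrap contributes exactly $n-j-1-k$ new inversions, because the chosen entry is the maximum of the remaining complement and hence exceeds all subsequent post-wrap entries. Re-indexing $p = j+1+k$ converts this into the weight $n-p$, which is precisely what produces the factor $t^{n-i}$ for $i \in \{j+2,\dots,n-1\}$ in the formula. Everything else reduces to careful bookkeeping across the three blocks (pre-wrap, wrap element, post-wrap).
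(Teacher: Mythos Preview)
Your proposal is correct and follows essentially the same approach as the paper: split $\A_n$ into $\LL_n$ and its complement, observe that the left-unimodal piece gives $\prod_{i=1}^{n-1}(1+t^ix_i)$, and for the non-left-unimodal piece parametrize by the index $j$ at which the prefix first fails to be an interval in $\bbz$, distinguishing the two cases $\pi(j+1)=n$ and $\pi(j+1)=1$. Your bookkeeping (the identification of $I_j$ as $\{1,\dots,j\}$ or $\{n-j+1,\dots,n\}$, the explicit cross-block inversion counts $c'=n-j-1$ and $c'=j(n-j)$, and the post-wrap weight $n-p$) is exactly what the paper's proof does, stated perhaps a bit more explicitly.
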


\begin{proof}
We separate permutations $\pi\in\A_n$ into those that are left-unimodal and those that are not.

Left-unimodal permutations are in bijection with subsets of $[n-1]$, the bijection given by taking their descent set.
Thus, such permutations are determined by choosing, for each $1\le i\le n-1$, whether $\pi(i)>\pi(i+1)$ or
$\pi(i)<\pi(i+1)$. In the first case, we introduce a descent in position $i$, and inversions between $\pi_{i+1}$ and all the preceding entries of $\pi$,
contributing $t^ix_i$ to the generating function, while in the second case no descents or inversions are created.
It follows that left-unimodal permutations contribute
$\prod_{i=1}^{n-1}(1+t^i x_i)$ to the generating function.

If $\pi$ is not left-unimodal, let $j$ be the largest such that $\{\pi(1),\dots,\pi(j)\}$ is an interval in $\bbz$. Note that $1\le j\le n-2$, and that $\pi(j+1)\in\{1,n\}$.

If $\pi(j+1)=1$, then the first $j$ entries in $\pi$ are larger than the last $n-j$ entries, creating $j(n-j)$ inversions and a descent in position $j$. For each $i$ with $1\le i\le j-1$ or $j+2\le i\le n-1$, we have the choice of whether $\pi(i)>\pi(i+1)$ or $\pi(i)<\pi(i+1)$.
For $1\le i\le j-1$, the first option introduces a descent in position $i$ and inversions between
$\pi_{i+1}$ and the entries to its left, contributing $t^ix_i$. Similarly, for $j+2\le i\le n-1$, the choice
$\pi(i)>\pi(i+1)$ introduces inversions between $\pi_{i}$ and the entries to its right, contributing $t^{n-i}x_i$. In total, the contribution of non-left-unimodal permutations with $\pi(j+1)=1$ is
$$t^{j(n-j)}x_j\prod_{i=1}^{j-1}(1+t^i x_i)\prod_{i=j+2}^{n-1}(1+t^{n-i} x_i).$$

If $\pi(j+1)=n$, the argument is similar, except that instead of a descent in position $j$ there is a descent in position $j+1$, and there are inversions between $\pi(j+1)$ and the entries to its right, so the contribution in this case is
$$t^{n-j-1}x_{j+1}\prod_{i=1}^{j-1}(1+t^i x_i)\prod_{i=j+2}^{n-1}(1+t^{n-i} x_i).$$
\end{proof}

Substituting $t=1$ in Theorem~\ref{thm:A-invDes} we recover the
following formula from~\cite{ER}:
\beq\label{eq:A-Des}
\sum_{\pi\in\A_n}\x^{\D(\pi)}=\prod\limits_{i=1}^{n-1}(1+x_{i})\left(1+\sum_{j=1}^{n-2}\frac{x_j+x_{j+1}}{(1+x_j)(1+x_{j+1})}\right)
\eeq
for every $n\ge 2$. It is now easy to obtain the $(\des,\maj)$-enumerator for arc permutations.
Recall the notation
$[n]_q=1+q+q^2+\dots+q^{n-1}=\frac{1-q^n}{1-q}$.

\begin{corollary}\label{arc_maj_des}
For every $n\ge2$,
$$
\sum\limits_{\pi\in \A_n} t^{\des(\pi)} q^{\maj(\pi)}=
\prod\limits_{i=2}^{n-2}(1+tq^i) \left(1+2tq[n-1]_{q}+t^2q^n\right).
$$
In particular,
$$
\sum\limits_{\pi\in \A_n} t^{\des(\pi)}=(1+t)^{n-3}\left(1+2(n-1)t+t^2\right).
$$
\end{corollary}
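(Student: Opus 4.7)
The plan is to derive both formulas from the descent-set generating function~(\ref{eq:A-Des}) by a monomial substitution. Since $\x^{\D(\pi)}=\prod_{i\in\D(\pi)}x_i$, the substitution $x_i\mapsto tq^i$ sends this monomial to $t^{|\D(\pi)|}q^{\sum_{i\in\D(\pi)}i}=t^{\des(\pi)}q^{\maj(\pi)}$. Applying it to~(\ref{eq:A-Des})---equivalently, rerunning the proof of Theorem~\ref{thm:A-invDes} with the inversion variable set to $1$ and descents tracked by $tq^i$---yields
\[
\sum_{\pi\in\A_n} t^{\des(\pi)}q^{\maj(\pi)} = \prod_{i=1}^{n-1}(1+tq^i) + \sum_{j=1}^{n-2}(tq^j+tq^{j+1})\!\!\prod_{\substack{1\le i\le n-1\\i\ne j,j+1}}\!\!(1+tq^i).
\]

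The main work is then a polynomial identity: showing that the right-hand side above equals $\prod_{i=2}^{n-2}(1+tq^i)\bigl(1+2tq[n-1]_q+t^2q^n\bigr)$. I would factor $\prod_{i=2}^{n-2}(1+tq^i)$ out of every summand. From the first term this leaves $(1+tq)(1+tq^{n-1})=1+tq+tq^{n-1}+t^2q^n$, so the summands must collectively contribute $2tq[n-1]_q-tq-tq^{n-1} = tq+2tq^2+2tq^3+\cdots+2tq^{n-2}+tq^{n-1}$. I would verify this either by a short induction on $n$, or by direct expansion treating the endpoint cases $j\in\{1,n-2\}$ (where some of the factors $(1+tq^j),(1+tq^{j+1})$ do not lie in the factored-out product) separately from the generic range $2\le j\le n-3$.

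The second formula is an immediate specialization. Setting $q=1$ in the display above turns the first term into $(1+t)^{n-1}$ and each of the $n-2$ summands into $2t(1+t)^{n-3}$, giving $(1+t)^{n-1}+2t(n-2)(1+t)^{n-3} = (1+t)^{n-3}\bigl((1+t)^2+2(n-2)t\bigr) = (1+t)^{n-3}(1+2(n-1)t+t^2)$, as claimed.

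The main obstacle is the algebraic simplification for general $q$: because $\prod_{i=2}^{n-2}(1+tq^i)$ omits the boundary factors $(1+tq)$ and $(1+tq^{n-1})$, the summands for $j=1$ and $j=n-2$ must be handled with extra care, and the identity does not telescope summand by summand---one really has to expand all contributions and recognize the closed form $2tq[n-1]_q+t^2q^n$. The $q=1$ check serves as a useful sanity test along the way.
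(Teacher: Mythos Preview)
Your setup is the same as the paper's: substitute $x_i=tq^i$ in~\eqref{eq:A-Des} and simplify. The divergence is in how you handle the simplification.

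You claim that ``the identity does not telescope summand by summand'' and propose to verify it by induction on $n$ or by ad hoc expansion. In fact it \emph{does} telescope, and this is exactly what the paper exploits. Keeping the expression in the rational form
\[
\prod_{i=1}^{n-1}(1+tq^i)\left(1+\sum_{j=1}^{n-2}\frac{tq^j(1+q)}{(1+tq^j)(1+tq^{j+1})}\right)
\]
and using the partial-fraction identity
\[
\frac{tq^{j}}{(1+tq^{j})(1+tq^{j+1})}=\frac{1}{1-q}\left(\frac{1}{1+tq^{j+1}}-\frac{1}{1+tq^{j}}\right),
\]
the inner sum collapses to $\dfrac{(1+q)\,tq\,[n-2]_q}{(1+tq)(1+tq^{n-1})}$, and the claimed product follows immediately. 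This is considerably cleaner than either route you outline.

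Your induction would in principle work (the inductive step reduces to a single polynomial identity of degree $3$ in $t$), but you have not carried it out, and the sentence ``factor $\prod_{i=2}^{n-2}(1+tq^i)$ out of every summand'' is not right as stated: for $2\le j\le n-3$ that product contains $(1+tq^j)(1+tq^{j+1})$, which the $j$-th summand lacks, so it does not divide the individual terms. What you really mean is that the product divides the \emph{total}, which is precisely the identity to be proved.

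Your treatment of the $q=1$ specialization is correct and matches the paper.
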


\begin{proof}
Substituting $x_i=tq^i$ for $1\le i\le n-1$ in
Equation~\eqref{eq:A-Des}, we get
$$\sum_{\pi\in\A_n} t^{\des(\pi)} q^{\maj(\pi)}=
\prod\limits_{i=1}^{n-1}(1+tq^i)\left(1+\sum_{j=1}^{n-2}\frac{tq^j(1+q)}{(1+tq^j)(1+tq^{j+1})}\right).$$
Using that
$$\frac{tq^{j}}{(1+tq^{j})(1+tq^{j+1})}=\frac{1}{1-q}\left(\frac{1}{1+tq^{j+1}}-\frac{1}{1+tq^{j}}\right),$$
the summation on the right-hand side becomes a telescopic sum that
simplifies to
$$\frac{(1+q)t q[n-2]_q}{(1+tq)(1+tq^{n-1})},$$
from where the first formula in the statement follows. The second formula is obtained by
substituting $q=1$.
\end{proof}

%




\bigskip

Now we turn to signed enumeration of arc permutations. Recall that $\sgn(\pi)=(-1)^{\inv(\pi)}$. Setting $t=-1$ in
Theorem~\ref{thm:A-invDes},
we get
\begin{multline}\label{eq:sgnDes}
\sum_{\pi\in\A_n} \sgn(\pi) \x^{\D(\pi)}= \prod_{i=1}^{n-1}(1+(-1)^i x_i)\\
+\sum_{j=1}^{n-2}\left(((-1)^{j(n-j)}x_j+(-1)^{n-j-1}x_{j+1})\prod_{i=1}^{j-1}(1+(-1)^i
x_i)\prod_{i=j+2}^{n-1}(1+(-1)^{n-i}
x_i)\right).
\end{multline}
When $n$ is even, this formula simplifies to
\begin{equation}\label{eq:sgnDeseven}
\sum_{\pi\in\A_n} \sgn(\pi) \x^{\D(\pi)}=
\prod_{i=1}^{n-1}(1+(-1)^i
x_i)\left(1+\sum_{j=1}^{n-2}\frac{(-1)^j(x_j-x_{j+1})}{(1+(-1)^j
x_j)(1+(-1)^{j+1} x_{j+1})}\right).
\end{equation}

\bigskip

\begin{theorem}\label{A:signed-maj}
For every $n\ge 2$
\begin{align}
\nn\sum_{\pi\in\A_n} q^{\maj(\pi)}&=[n]_q \prod\limits_{i=1}^{n-2} (1+q^{i}),\\
\nn\sum\limits_{\pi\in \A_n} \sgn(\pi) q^{\maj(\pi)}&=
[n]_{(-1)^{n-1}q} \prod\limits_{i=1}^{n-2} (1+ (-q)^i).
\end{align}
\end{theorem}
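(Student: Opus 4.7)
The plan is to derive both identities by specializing the descent-set generating functions already computed in the paper at $x_i = q^i$.

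The first identity follows immediately from Corollary \ref{arc_maj_des} by setting $t = 1$. The right-hand side becomes $\prod_{i=2}^{n-2}(1+q^i) \cdot (1 + 2q[n-1]_q + q^n)$, and expanding both sides shows $1 + 2q[n-1]_q + q^n = 1 + 2(q + q^2 + \cdots + q^{n-1}) + q^n = (1+q)[n]_q$. Absorbing the factor $1+q$ into the product yields $[n]_q \prod_{i=1}^{n-2}(1+q^i)$, as claimed.

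For the signed identity, substitute $x_i = q^i$ in \eqref{eq:sgnDes} and consider the two parities of $n$ separately. When $n$ is even, \eqref{eq:sgnDeseven} applies. Setting $Q = -q$, the summand $\frac{(-1)^j(q^j - q^{j+1})}{(1+(-q)^j)(1+(-q)^{j+1})}$ simplifies to $\frac{Q^j(1+Q)}{(1+Q^j)(1+Q^{j+1})}$, which is exactly the unsigned summand from the proof of the first identity with $Q$ in place of $q$. Applying the telescoping identity $\frac{Q^j(1+Q)}{(1+Q^j)(1+Q^{j+1})} = \frac{1+Q}{1-Q}\left(\frac{1}{1+Q^{j+1}} - \frac{1}{1+Q^j}\right)$ and combining with the prefactor $\prod_{i=1}^{n-1}(1+Q^i)$ produces $[n]_Q \prod_{i=1}^{n-2}(1+Q^i) = [n]_{-q}\prod_{i=1}^{n-2}(1+(-q)^i)$, matching the claim since $(-1)^{n-1} = -1$.

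When $n$ is odd, the signs in \eqref{eq:sgnDes} simplify to $(-1)^{j(n-j)} = 1$, $(-1)^{n-j-1} = (-1)^j$, and $(-1)^{n-i} = -(-1)^i$, so the rightmost product becomes $\prod_{i=j+2}^{n-1}(1-(-q)^i)$ rather than $\prod_{i=j+2}^{n-1}(1+(-q)^i)$. The substitution $Q = -q$ no longer collapses the expression onto the unsigned case. The telescope can be salvaged by splitting the outer sum on the parity of $j$ (so that $q^j + (-1)^j q^{j+1}$ becomes $q^j(1+q)$ or $q^j(1-q)$ in each class), or by using the identity $(1+(-q)^i)(1-(-q)^i) = 1 - q^{2i}$ to bring every summand over the common denominator $\prod_{i=1}^{n-1}(1+(-q)^i)$ and observing that the resulting numerator telescopes. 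Either route leads after simplification to $[n]_q \prod_{i=1}^{n-2}(1+(-q)^i)$, matching the claim since $(-1)^{n-1} = 1$. The main obstacle is this $n$ odd case: the mixed sign pattern on the two sides of positions $j, j+1$ requires a parity-based regrouping before the telescope becomes apparent; once the regrouping is set up, the remaining algebra is routine bookkeeping.
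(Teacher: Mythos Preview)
Your approach is the same as the paper's: both identities come from specializing the descent-set enumerator at $x_i=q^i$, and the signed identity splits on the parity of $n$. Your derivation of the first formula and of the even-$n$ case is complete and matches the paper essentially line for line (the paper uses $z=-q$ where you use $Q=-q$).

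The odd-$n$ case, however, is only a sketch, and ``routine bookkeeping'' understates what remains. Neither of your two suggested routes is carried out, and the second one is not well-posed as stated: the summands in \eqref{eq:sgnDes} carry the factors $\prod_{i=j+2}^{n-1}(1-(-q)^i)$ \emph{in the numerator}, so there is no common denominator to clear, and the identity $(1+(-q)^i)(1-(-q)^i)=1-q^{2i}$ does not by itself produce a telescope. The paper's execution (with $z=-q$) proceeds via two successive telescopes that each rest on a specific algebraic identity: first
\[
(-1)^j z^j - z^{j+1} = (1-z^{j+1}) - (1-(-1)^j z^j),
\]
which collapses the sum to boundary terms plus a residual sum over even $j$ only (this is the ``parity-based regrouping'' you allude to), and then
\[
2z^j = \frac{(1+z^j)(1+z^{j+1}) - (1-z^j)(1-z^{j+1})}{1+z}
\]
to telescope that residual sum. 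These steps are not hard once seen, but they are the substance of the odd case; your proposal identifies the obstacle correctly but does not supply them. To make the proof complete you should carry out one of your routes in full rather than assert that it works.
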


\begin{proof}
Substituting $t=1$ in Corollary~\ref{arc_maj_des} gives the first
formula, which already appears   in~\cite[Cor. 7]{ER}. To prove the second formula, we consider two cases
depending on the parity of $n$.

\medskip

If $n$ is even, substituting $x_i=q^i$ for $1\le i\le n-1$ in
Equation~\eqref{eq:sgnDeseven} gives
$$\sum_{\pi\in\A_n} \sgn(\pi) q^{\maj(\pi)}= \prod_{i=1}^{n-1}(1+(-q)^i)\left(1+\sum_{j=1}^{n-2}\frac{(-1)^j(q^j-q^{j+1})}{(1+(-q)^j)(1+(-q)^{j+1})}\right).$$
Letting $z=-q$, the formula becomes
\beq\label{eq:sgnDesz}\prod_{i=1}^{n-1}(1+z^i)\left(1+\sum_{j=1}^{n-2}\frac{z^j+z^{j+1}}{(1+z^j)(1+z^{j+1})}\right),\eeq
where the sum can be simplified as
$$\frac{1+z}{1-z}\sum_{j=1}^{n-2}\left(\frac{1}{1+z^{j+1}}-\frac{1}{1+z^j}\right)
=\frac{1+z}{1-z}\left(\frac{1}{1+z^{n-1}}-\frac{1}{1+z}\right)=\frac{z-z^{n-1}}{(1-z)(1+z^{n-1})},
$$
and so Equation~\eqref{eq:sgnDesz} equals
$$\prod_{i=1}^{n-1}(1+z^i)\left(1+\frac{z-z^{n-1}}{(1-z)(1+z^{n-1})}\right)=\prod_{i=1}^{n-2}(1+z^i)[n]_z=[n]_{-q}\prod_{i=1}^{n-2}(1+(-q)^i).$$

If $n$ is odd, substituting $x_j=q^j$ in
Equation~\eqref{eq:sgnDes} gives
\begin{eqnarray}\nn\sum_{\pi\in\A_n} \sgn(\pi) q^{\maj(\pi)}&=& \prod_{i=1}^{n-1}(1+(-q)^i)+\sum_{j=1}^{n-2}\left( (q^j+(-1)^jq^{j+1})\prod_{i=1}^{j-1}(1+(-q)^i) \prod_{i=j+2}^{n-1}(1-(-q)^{i})\right)\\
\label{eq:sgnmaj} &=&
\prod_{i=1}^{n-1}(1+z^i)+\sum_{j=1}^{n-2}\left(((-1)^j
z^j-z^{j+1})\prod_{i=1}^{j-1}(1+z^i)
\prod_{i=j+2}^{n-1}(1-z^{i})\right),
\end{eqnarray}
letting $z=-q$ again. Writing $(-1)^j
z^j-z^{j+1}=(1-z^{j+1})-(1-(-1)^j z^j)$, the summation on the
right-hand side of Equation~\eqref{eq:sgnmaj} becomes a
telescopic sum
\begin{multline}\sum_{j=1}^{n-2}\left(\prod_{i=1}^{j-1}(1+z^i)\prod_{i=j+1}^{n-1}(1-z^{i})-(1-(-1)^jz^j)\prod_{i=1}^{j-1}(1+z^i)\prod_{i=j+2}^{n-1}(1-z^{i})\right)\\
=\prod_{i=2}^{n-1}(1-z^i)+\sum_{\substack{j=2 \\
j\,\text{even}}}^{n-3}\left(2z^j\prod_{i=1}^{j-1}(1+z^i)\prod_{i=j+2}^{n-1}(1-z^{i})\right)-\prod_{i=1}^{n-2}(1+z^i),
\label{eq:secondsum}
\end{multline}
noting that $-(1-(-1)^jz^j)+(1+z^{j})=2z^j$ when $j$ is even and
$0$ otherwise. Now, writing
$$2z^j=\frac{(1+z^j)(1+z^{j+1})-(1-z^j)(1-z^{j+1})}{1+z},$$ the
summation in the middle of Equation~\eqref{eq:secondsum}
also becomes a telescopic sum
\begin{multline*}\frac{1}{1+z}\sum_{\substack{j=2 \\ j\,\text{even}}}^{n-3}\left(\prod_{i=1}^{j+1}(1+z^i)\prod_{i=j+2}^{n-1}(1-z^{i})-\prod_{i=1}^{j-1}(1+z^i)\prod_{i=j}^{n-1}(1-z^{i})\right)\\
=\frac{1}{1+z}\left(-(1+z)\prod_{i=2}^{n-1}(1-z^{i})+(1-z^{n-1})\prod_{i=1}^{n-2}(1+z^i)\right)=-\prod_{i=2}^{n-1}(1-z^{i})+(1-z^{n-1})\prod_{i=2}^{n-2}(1+z^i).
\end{multline*}
With these simplifications, Equation~\eqref{eq:sgnmaj} equals
\begin{multline*}\prod_{i=1}^{n-1}(1+z^i)+
\prod_{i=2}^{n-1}(1-z^i)-\prod_{i=2}^{n-1}(1-z^{i})+(1-z^{n-1})\prod_{i=2}^{n-2}(1+z^i)-\prod_{i=1}^{n-2}(1+z^i)\\
=\prod_{i=1}^{n-2}(1+z^i)\left(1+z^{n-1}+\frac{1-z^{n-1}}{1+z}-1\right)=[n]_{-z}\prod_{i=1}^{n-2}(1+z^i)=[n]_{q}\prod_{i=1}^{n-2}(1+(-q)^i).
\end{multline*}
\end{proof}

A different approach to prove Theorem~\ref{A:signed-maj} will be
described in Section~\ref{sec:another_approach}.


\section{The hyperoctahedral group: preliminaries and notation}\label{intro}

The hyperoctahedral group $B_n$
may be realized as a group of signed
permutations as follows. We denote by $B_{n}$ the group of all bijections $\pi$ of the
set $[\pm n]=\{-1,-2,\dots,-n,1,2,\dots,n\}$ onto itself such that
$$
\pi(-a) = -\pi(a)
$$
for every $1\le a\le n$, with composition as the group operation. This group is usually
known as the group of {\em signed permutations} on $\{1,2,\dots,n\}$, or as the
{\em hyperoctahedral group} of rank $n$. We identify $\S_{n}$ as a
subgroup of $B_{n}$, and $B_{n}$ as a subgroup of $\S_{2n}$ in
the natural ways.

If $\pi \in B_{n}$, we write $\pi = [a_{1}, \ldots ,a_{n}]$
to mean that $\pi(i)=a_{i}$ for $1\le i\le n$.
The Coxeter generating set of $B_n$ is $\s=\{\sigma_i:\ 0\le
i<n\}$, where $\sigma_0=[-1,2,3,4,\dots,n]$ and, for $1\le i<n$,
$\sigma_i$ is the adjacent transposition $(i,i+1)$.

\medskip

We recall some statistics on $B_n$. For $\pi\in B_n$, we say that $i$
is a descent in $\pi$ if $\pi(i)>\pi(i+1)$ with respect to
the order $-1<-2<\cdots<-n<1<2<\cdots<n$. We use the following
standard notation:
\begin{align*}
\D(\pi) &:=\{1\le i\le n-1:\ \pi(i)>\pi(i+1)\},\\
\des(\pi)&:=|\D(\pi)|,\\
\maj(\pi)&:=\sum\limits_{i\in \D(\pi)} i,\\
\Neg(\pi)&:=\{1\le i\le n:\ \pi(i)<0\},\\
\nega(\pi)&:=|\Neg(\pi)|.
\end{align*}
Two more statistics, defined in~\cite{AR-fm} and~\cite{ABR},
respectively, are the {\em flag-major index}
$$
\fmaj(\pi):= 2\cdot \maj(\pi)+\nega(\pi),
$$
and {\em flag-descent number}
$$
\fdes(\pi):=2\cdot \des(\pi)+\delta(\pi(1)<0),
$$
where $\delta(a):=1$ if the event $a$ occurs and zero otherwise.

\medskip

The statistics $\fmaj$ and $\fdes$ have been shown to play a
significant role in the study of $B_n$, which is analogous to the
role of the classical descent statistics on $\S_n$. Some examples in the literature
are \cite{ABR1, AGR, BRS, BeBr, CG, FH1, FH2, FH3}.

\section{Signed arc permutations}\label{sec:signed-arc}

\subsection{Definition and basic properties}

In this section we introduce our first generalization of arc permutations to type $B$.

\begin{definition}\label{def:signed_arc}
A permutation  $\pi=[\pi(1),\dots,\pi(n)]\in B_n$ is a {\em signed
arc permutation} if, for every $1< i<n$,
\begin{itemize}
\item the prefix $\{|\pi(1)|,\dots,|\pi(i)|\}$ forms an interval
in $\bbz_n$; and
\item the sign of $\pi(i)$ is positive if
$|\pi(i)|-1\in \{|\pi(1)|,\dots,|\pi(i-1)|\}$ and negative if
$|\pi(i)|+1\in \{|\pi(1)|,\dots,|\pi(i-1)|\}$ (with addition in $\bbz_n$).
\end{itemize}
Denote by $\As_n$ the set of signed arc permutations in $B_n$.
\end{definition}

Note that there is no restriction on the signs of $\pi(1)$ and $\pi(n)$.

\begin{example}We have that $[2,-1,3]\in\As_3$ and $[-3,-2,4,1]\in\As_4$, but $[-2,1,3]\notin\As_3$.
\end{example}

For $\pi\in B_n$, let $|\pi|=|\pi(1)||\pi(2)|\dots|\pi(n)|\in\S_n$.
Note that if $\pi\in\As_n$, then $|\pi|\in\A_n$.

\begin{remark}
{\rm The apparent ad-hoc determination of the signs in the second
part of Definition~\ref{def:signed_arc} surprisingly results in a
coherent combinatorial structure to be described below, which
further leads to interesting quasi-symmetric functions of type $B$
to be discussed in a forthcoming paper.
}
\end{remark}

\begin{claim}
For $n\ge1$, $|\As_n|=n2^n$.
\end{claim}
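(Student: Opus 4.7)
The plan is to exhibit, for $n \geq 2$, a bijection
$$\As_n \;\longleftrightarrow\; \A_n \times \{+,-\}^2, \qquad \pi \longmapsto \bigl(|\pi|,\, \sgn(\pi(1)),\, \sgn(\pi(n))\bigr),$$
and to combine it with the identity $|\A_n| = n\, 2^{n-2}$ recalled from~\cite{ER} to conclude that $|\As_n| = 4 \cdot n\, 2^{n-2} = n\, 2^n$. The case $n=1$ will be handled directly: $\As_1 = \{[1], [-1]\}$ has the correct cardinality $2$.

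That the map is well defined is immediate from the first bullet of Definition~\ref{def:signed_arc}: if $\pi \in \As_n$, then $|\pi| \in \A_n$. Injectivity is equally quick, because for every interior position $2 \leq i \leq n-1$ the second bullet of Definition~\ref{def:signed_arc} forces $\sgn(\pi(i))$ in terms of $|\pi|$ alone, so $\pi$ is reconstructed from $|\pi|$ together with the two recorded signs at positions $1$ and $n$.

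Surjectivity is the substantive step. Given $\sigma \in \A_n$ and $\epsilon_1, \epsilon_n \in \{+,-\}$, I would need to show that filling in the interior signs according to the rule of Definition~\ref{def:signed_arc} is unambiguous — that is, for every $1 < i < n$ exactly one of $|\sigma(i)|-1$ and $|\sigma(i)|+1$ (taken in $\bbz_n$) belongs to the prefix $P := \{|\sigma(1)|, \ldots, |\sigma(i-1)|\}$. Since $\sigma \in \A_n$, both $P$ and $P \cup \{|\sigma(i)|\}$ are intervals in $\bbz_n$, which forces $|\sigma(i)|$ to be adjacent in $\bbz_n$ to one of the two endpoints of $P$. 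The inequality $i-1 \leq n-2$ guarantees that the complement of $P$ has at least two elements, so the two neighbors of $P$ in $\bbz_n$ are distinct; hence $|\sigma(i)|$ is adjacent to exactly one endpoint. This is precisely the statement that exactly one of $|\sigma(i)| \pm 1$ lies in $P$, so Definition~\ref{def:signed_arc} assigns a unique sign to $\pi(i)$ with no conflict.

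I do not anticipate any real obstacle beyond tracking this size inequality. The one conceptual point worth flagging is that positions $1$ and $n$ are genuinely unconstrained: at $i = n$ the prefix has $n-1$ elements, so both $|\sigma(n)| \pm 1$ lie in it and either sign is consistent (this is why Definition~\ref{def:signed_arc} restricts to $1 < i < n$), and at $i = 1$ there is no prior prefix at all. These two free binary choices at the boundary are exactly what produce the extra factor of $4 = 2 \times 2$ in the bijection.
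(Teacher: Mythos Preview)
Your proposal is correct and follows exactly the paper's approach: the paper's proof simply observes that each $\sigma\in\A_n$ has four preimages in $\As_n$ since ``all the signs but those of the first and the last entry are determined,'' then multiplies $4\cdot|\A_n|=4\cdot n2^{n-2}$. You have supplied the justification the paper leaves implicit, namely that for each interior position exactly one of $|\sigma(i)|\pm1$ lies in the prefix (so the sign rule is both well-defined and nonconflicting), but the underlying idea is identical.
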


\begin{proof}
The equality is trivial for $n=1$, so we may assume that $n\ge2$.
From every $\sigma\in\A_n$, there are four
permutations $\pi\in\As_n$ such that $|\pi|=\sigma$, since all the signs
but those of the first and the last entry are determined.
It follows that $|\As_n|=4|\A_n|=n2^n$.
\end{proof}

\subsection{Characterization by pattern avoidance}

Let us recall the standard definition of pattern avoidance in the hyperoctahedral group. Given $\pi=[\pi(1),\dots,\pi(n)]\in B_n$ and
$\sigma=[\sigma(1),\dots,\sigma(k)]\in B_k$, we say that $\pi$ contains the pattern $\sigma$
if there exist indices $1\le i_1<\dots<i_k\le n$ such that
\begin{itemize}
\item $\pi(i_j)$ and $\sigma(j)$ have the same sign for all $1\le j\le k$, and
\item $|\pi(i_1)||\pi(i_2)|\dots|\pi(i_k)|$ is in the same relative order as
 $|\sigma(1)||\sigma(2)|\dots|\sigma(k)|$.
\end{itemize}
In this case, $\pi(i_1)\pi(i_2)\dots\pi(i_k)$ is called an {\em occurrence} of $\sigma$.
Otherwise, we say that $\pi$ {\em avoids} $\sigma$. For example, $[-3,2,5,-1,4]$ contains the pattern $[-2,-1,3]$, because the subsequence $-3,-1,4$ is an occurrence of this pattern, but it avoids the pattern $[2,1,3]$.

In analogy with Theorem~\ref{arc-pattern-thm} for arc permutations in $\S_n$, we can characterize signed arc permutations in terms of pattern avoidance.

\begin{theorem}\label{signed_arc_pattern}
A permutation $\pi\in B_n$ is a signed arc permutation if and only if it avoids the following $24$ patterns:
$$[\pm1,-2,\pm3],[\pm1,3,\pm2],[\pm2,-3,\pm1],[\pm2,1,\pm3],[\pm3,-1,\pm2],[\pm3,2,\pm1].$$
\end{theorem}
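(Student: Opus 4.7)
The plan is to prove both directions of the equivalence, using Theorem~\ref{arc-pattern-thm} to handle the ``unsigned half'' of Definition~\ref{def:signed_arc} and a direct analysis of the sign condition for the rest. Throughout I abbreviate $S_i:=\{|\pi(1)|,\ldots,|\pi(i)|\}$.

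For the necessity direction, I would suppose $\pi\in\As_n$ and, for contradiction, that $\pi(i_1)\pi(i_2)\pi(i_3)$ is an occurrence of one of the $24$ patterns. Then $1<i_2<n$, so Definition~\ref{def:signed_arc} forces $|\pi(i_2)|$ to be the cyclic successor of the interval $S_{i_2-1}$ in $\bbz_n$ when $\pi(i_2)>0$, and the cyclic predecessor when $\pi(i_2)<0$. Combining this with the integer relative order dictated by the pattern, together with $|\pi(i_1)|\in S_{i_2-1}$ and $|\pi(i_3)|\notin S_{i_2}$, a short case check in each of the $24$ scenarios (with a separate treatment when $S_{i_2-1}$ wraps across $\{n,1\}$) yields a contradiction: either $|\pi(i_3)|$ is forced to lie inside the prefix interval, or $|\pi(i_1)|$ is forced to lie outside it.

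For the sufficiency direction, I prove the contrapositive and split into two sub-cases. \emph{Case (a):} If $|\pi|\notin\A_n$ then by Theorem~\ref{arc-pattern-thm} there exist $i_1<i_2<i_3<i_4$ such that $|\pi(i_1)|\cdots|\pi(i_4)|$ realizes one of the eight length-four patterns $\tau$. A direct inspection of each $\tau$ shows that the subsequence $\pi(i_1)\pi(i_2)\pi(i_3)$ is a forbidden signed pattern when $\pi(i_2)$ has one sign and $\pi(i_1)\pi(i_2)\pi(i_4)$ is a forbidden signed pattern when $\pi(i_2)$ has the other, so $\pi$ always contains one of the $24$ patterns. \emph{Case (b):} If $|\pi|\in\A_n$ but the sign of $\pi(i)$ violates Definition~\ref{def:signed_arc} for some $1<i<n$, pick the unique $j<i$ with $|\pi(j)|\equiv|\pi(i)|\pm 1\pmod{n}$ dictated by the sign condition (carrying the ``wrong'' sign), and any $k$ with $i<k\le n$ (which exists since $i<n$). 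Because $|\pi(j)|$ and $|\pi(i)|$ are cyclically consecutive, $|\pi(k)|$ is forced into a determined region relative to them as integers (strictly above or below the pair when $\{|\pi(i)|,|\pi(j)|\}\ne\{1,n\}$, strictly between them when it equals $\{1,n\}$), and a short check shows that $\pi(j)\pi(i)\pi(k)$ matches one of the $24$ forbidden patterns in each configuration.

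The main obstacle is organizing the case analysis around the cyclic order on $\bbz_n$: whenever $S_{i-1}$ wraps across $n$ and $1$, or the pair $\{|\pi(i)|,|\pi(j)|\}$ equals $\{1,n\}$, the integer and cyclic orders disagree and the wrapping configurations must be handled separately. Grouping the $24$ forbidden patterns into their six absolute-value shapes, each with a prescribed middle sign matching what the signed arc condition would impose at an internal position, makes the bookkeeping systematic.
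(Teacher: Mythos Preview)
Your approach is correct, but it diverges from the paper's proof in an instructive way. The paper's key simplifying device is to observe that the $24$ forbidden patterns are exactly the elements $[\pm a,\pm b,\pm c]\in B_3$ with $(a,b,c)$ a \emph{clockwise} triple and the middle entry negative, or $(a,b,c)$ a \emph{counterclockwise} triple and the middle entry positive. This cyclic description lets the paper treat all $24$ patterns with a single two-case argument (according to the sign of $\pi(i_2)$): if $\pi(i_2)>0$ and $(|\pi(i_1)|,|\pi(i_2)|,|\pi(i_3)|)$ is counterclockwise, then the interval $S_{i_2-1}$ contains $|\pi(i_2)|-1$ and $|\pi(i_1)|$ but not $|\pi(i_2)|$, and the cyclic order forces it to contain $|\pi(i_3)|$ as well---a contradiction. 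No separate treatment of wrapping is needed, and there is no $24$- (or $6$-) fold case split.

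For the converse, the paper again avoids your split into Cases~(a) and~(b). It takes the \emph{smallest} index $i$ at which Definition~\ref{def:signed_arc} fails; then $S_{i-1}$ is still an interval, and in both failure modes (prefix not an interval, or wrong sign) one has $|\pi(i)|-1\notin S_{i-1}$ when $\pi(i)>0$ (respectively $|\pi(i)|+1\notin S_{i-1}$ when $\pi(i)<0$). Choosing $j>i$ with $|\pi(j)|=|\pi(i)|\mp1$, the triple $(|\pi(1)|,|\pi(i)|,|\pi(j)|)$ is automatically counterclockwise (respectively clockwise), so $\pi(1)\pi(i)\pi(j)$ is an occurrence of a forbidden pattern. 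This bypasses your use of Theorem~\ref{arc-pattern-thm} and the eight length-$4$ patterns entirely.

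What your route buys is a more explicit bookkeeping of which forbidden length-$3$ pattern appears in each configuration, and it makes the connection to Theorem~\ref{arc-pattern-thm} transparent. What the paper's route buys is brevity and uniformity: the clockwise/counterclockwise viewpoint absorbs both the wrapping issues you flag and the distinction between the interval condition and the sign condition, reducing everything to two symmetric cases.
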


We say that a triple $(a,b,c)$ of different integers in $\{1,2,\dots,n\}$ is a {\em clockwise} triple if either $a<b<c$, $b<c<a$ or $c<a<b$. Otherwise, we say that it is a {\em counterclockwise} triple. The name comes from the direction determined by the triple $(a,b,c)$ in the circle where the entries $1,2,\dots, n$ have been written in clockwise order.

Note that the patterns listed in Theorem~\ref{signed_arc_pattern} are precisely  those permutations in $B_3$ of the form $[\pm a,-b,\pm c]$ where $(a,b,c)$ is a clockwise triple, and $[\pm a,b,\pm c]$ where $(a,b,c)$ is a counterclockwise triple.

\begin{proof}[Proof of Theorem~\ref{signed_arc_pattern}]
In this proof, addition and subtraction are in $\bbz_n$, and so are intervals.

Let $\pi\in B_n$ contain an occurrence $\pi(i_1)\pi(i_2)\pi(i_3)$ of one of the $24$ listed patterns. Suppose for contradiction that $\pi\in\As_n$.

If $\pi(i_2)>0$, then $(|\pi(i_1)|,|\pi(i_2)|,|\pi(i_3)|)$ is a
counterclockwise triple. Since $\pi\in\As_n$ and $\pi(i_2)$ is
positive, the interval $\{|\pi(1)|,\dots,|\pi(i_2-1)|\}$ contains
$|\pi(i_2)|-1$ and $|\pi(i_1)|$, but not $|\pi(i_2)|$. Thus, it
must also contain $|\pi(i_3)|$, which is a contradiction.

Similarly, if $\pi(i_2)<0$, then $(|\pi(i_1)|,|\pi(i_2)|,|\pi(i_3)|)$ is a clockwise triple, and the interval $\{|\pi(1)|,\dots,|\pi(i_2-1)|\}$ contains
$|\pi(i_2)|+1$  and $|\pi(i_1)|$, but not $|\pi(i_2)|$. Thus, it must also contain $|\pi(i_3)|$, again a contradiction.

\medskip

To prove the converse, suppose now that $\pi\in B_n$ is not a signed arc permutation.
Let $i$ be the smallest index where the conditions from Definition~\ref{def:signed_arc} fail.
This means that either $\{|\pi(1)|,\dots,|\pi(i)|\}$ is not an interval in $\bbz_n$, or $\pi(i)$ has the wrong sign. In the first case, neither of the values $|\pi(i)|\pm 1$  is in the interval $\{|\pi(1)|,\dots,|\pi(i-1)|\}$. In the second case, either $\pi(i)>0$ but $|\pi(i)|-1\notin \{|\pi(1)|,\dots,|\pi(i-1)|\}$, or  $\pi(i)<0$ but $|\pi(i)|+1\notin \{|\pi(1)|,\dots,|\pi(i-1)|\}$.

If $\pi(i)>0$ (respectively, $\pi(i)<0$), let $j>i$ be such that $|\pi(j)|=|\pi(i)|-1$ (respectively, $|\pi(j)|=|\pi(i)|+1$). Then $(|\pi(1)|,|\pi(i)|,|\pi(j)|)$ is a counterclockwise (respectively, clockwise) triple, so $\pi(1)\pi(i)\pi(j)$ is an occurrence of one of the $24$ listed patterns.
\end{proof}

\subsection{Descent set enumerators}

Next we describe the joint distribution of the descent set and the set of negative entries on signed arc permutations.

\begin{theorem}\label{signed_Des_Neg_inv}
For every $n\ge1$,
\beq\label{eq:signed_Des_Neg}
\sum_{\pi\in\As_n} \x^{\D(\pi)} \y^{\Neg(\pi)}=\prod_{i=1}^{n}(1+x_{i-1}y_{i})\left(1+\sum_{j=1}^{n-1} \frac{(x_{j}+x_{j-1}y_{j})(1+y_{j+1})}{(1+x_{j-1}y_{j})(1+x_{j}y_{j+1})}\right),
\eeq
and
\begin{multline}\label{eq:signed_Des_Neg_inv}
\sum_{\pi\in\As_n} t^{\inv(|\pi|)} \x^{\D(\pi)} \y^{\Neg(\pi)}=\prod_{i=1}^{n}(1+t^{i-1}x_{i-1}y_{i})\\
+\sum_{j=1}^{n-1}\left( (x_{j}+t^{j-1}x_{j-1}y_{j})(t^{j(n-j)}+t^{n-j-1}y_{j+1})\prod_{i=1}^{j-1}(1+t^{i-1}x_{i-1}y_{i})
\prod_{i=j+2}^{n}(1+t^{n-i}x_{i-1}y_{i})\right),
\end{multline}
with the convention that $x_0:=1$.
\end{theorem}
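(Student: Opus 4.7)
The plan is to prove the second (refined) formula; the first then follows by setting $t=1$. The strategy mirrors the proof of Theorem~\ref{thm:A-invDes}, tracking signs alongside descents and inversions. For $\pi\in\As_n$ write $\sigma:=|\pi|\in\A_n$; by Definition~\ref{def:signed_arc}, the signs of $\pi(2),\ldots,\pi(n-1)$ are forced by $\sigma$, while $s_1:=\sgn(\pi(1))$ and $s_n:=\sgn(\pi(n))$ are free. Following the proof of Theorem~\ref{thm:A-invDes}, I split $\As_n$ by whether $\sigma$ is left-unimodal: Case~A, with $\sigma\in\LL_n$; and Case~B, with $\sigma\notin\LL_n$ and largest $j\in\{1,\ldots,n-2\}$ such that $\{\sigma(1),\ldots,\sigma(j)\}$ is an interval in $\bbz$, so that $\sigma(j+1)\in\{1,n\}$.

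The key local observation, verified by a short check using the order $-1<\cdots<-n<1<\cdots<n$, is that in the ``left-unimodal regime'' (positions before any wrap), the position $i$ is a descent of $\pi$ if and only if $\pi(i+1)<0$. Combined with the inversion count for a left-to-right extension (each $L$ direction at step $i$ adds $i-1$ inversions) and the convention $x_0=1$ absorbing the free sign bit $s_1$, this yields position-by-position factors of the form $(1+t^{i-1}x_{i-1}y_i)$. In Case~A, parametrize $\sigma$ by a free direction sequence $(d_2,\ldots,d_{n-1})\in\{L,R\}^{n-2}$, the last direction $d_n$ being forced by $\sigma$. The descent at position $n-1$ breaks the uniform rule: a short $(d_{n-1},s_n,d_n)$ case table shows that the condition depends asymmetrically on these three bits, and I would split Case~A accordingly by whether $s_n=d_n$ (``synchronized'') or not. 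Summing the synchronized configurations yields exactly the product $\prod_{i=1}^n(1+t^{i-1}x_{i-1}y_i)$ of the RHS; summing the anti-synchronized ones, after collecting the four $(d_{n-1},s_n,d_n)$ contributions, yields the $j=n-1$ summand $(x_{n-1}+t^{n-2}x_{n-2}y_{n-1})(t^{n-1}+y_n)\prod_{i=1}^{n-2}(1+t^{i-1}x_{i-1}y_i)$.

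In Case~B with parameter $j$, the first $j$ positions form a left-unimodal prefix contributing $\prod_{i=1}^{j-1}(1+t^{i-1}x_{i-1}y_i)$ together with a boundary factor $(x_j+t^{j-1}x_{j-1}y_j)$ at position $j$, whose two terms track $d_j=R$ (forcing a descent at $j$) and $d_j=L$ (forcing a descent at $j-1$ with $j\in\Neg(\pi)$ and $j-1$ inversions). The wrap at step $j+1$ forces $\pi(j+1)=1$ (creating $j(n-j)$ cross-inversions) or $\pi(j+1)=-n$ (creating $n-j-1$ rightward inversions and $j+1\in\Neg(\pi)$), giving the factor $(t^{j(n-j)}+t^{n-j-1}y_{j+1})$. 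The remaining entries $\sigma(j+2),\ldots,\sigma(n)$ form an arc permutation on the complementary $(n-j-1)$-element set; a symmetric analysis --- with descents in the right part creating inversions with entries to their right, hence contributing $t^{n-i}$ in place of $t^{i-1}$ --- gives $\prod_{i=j+2}^n(1+t^{n-i}x_{i-1}y_i)$, with the free sign $s_n$ absorbed into the final factor. Summing all contributions recovers the RHS.

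The principal obstacle is the careful descent/inversion bookkeeping at the three boundary locations --- positions $1$, $n$, and the wrap $j+1$ --- where the signs are free or forced in ways that break the interior uniformity. The key new structural feature compared to Theorem~\ref{thm:A-invDes} is the split of Case~A into synchronized and anti-synchronized sub-cases, which explains why the RHS sum ranges over $j=1,\ldots,n-1$ rather than $j=1,\ldots,n-2$.
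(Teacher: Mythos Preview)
Your proposal is correct and takes essentially the same approach as the paper. The paper's proof likewise separates the case where $|\pi|$ is left-unimodal and $\pi(n)\in\{-1,n\}$ (your ``synchronized'' sub-case, giving the product term) from the remaining permutations, parametrized by $1\le j\le n-1$ with $j=n-1$ covering left-unimodal $|\pi|$ with $\pi(n)\in\{1,-n\}$ (your ``anti-synchronized'' sub-case), and then splits each summand according to the signs of $\pi(j)$ and $\pi(j+1)$ exactly as you describe.
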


\begin{proof}
Since \eqref{eq:signed_Des_Neg} follows from \eqref{eq:signed_Des_Neg_inv} by setting $t=1$ and simplifying, it suffices to prove \eqref{eq:signed_Des_Neg_inv}.

If $\pi\in\As_n$ is such that $|\pi|$ is left-unimodal, then $|\pi(n)|\in\{1,n\}$. Let us
first consider signed arc permutations where $|\pi|$ is left-unimodal and $\pi(n)\in\{-1,n\}$.
The contribution of such permutations to the generating function is
$$\prod_{i=1}^{n}(1+t^{i-1}x_{i-1}y_{i}).$$
Indeed, such permutations are uniquely determined by a choice of sign of $\pi(i)$ for $1\le i\le n$.
If $\pi(i)$ is negative, it creates a descent with $\pi(i-1)$ (for $i>1$) and inversions in $|\pi|$ with all the preceding entries, contributing a factor $t^{i-1}x_{i-1}y_i$. If $\pi(i)$ is positive,
then no descent or inversions with preceding entries are created.

Let us now consider the remaining permutations $\pi\in\As_n$, and let $j+1$ be the first index where $\pi$ fails to be in the set considered above. In other words, if $|\pi|$ is not left-unimodal, $j$ is the largest such that $\{|\pi(1)|,\dots,|\pi(j)|\}$ is an interval in $\bbz$, and note that $1\le j\le n-2$ in this case.
On the other hand, if $|\pi|$ is left-unimodal but $\pi(n)\in\{1,-n\}$, then $j=n-1$. Consider two cases depending on the sign of $\pi(j+1)$.

\begin{itemize}
\item If $\pi(j+1)$ is positive, we must have $\pi(j+1)=1$. In this case, the first $j$ entries in $|\pi|$ are larger than the last $n-j$ entries, creating $j(n-j)$ inversions.
The contribution of permutations where $\pi(j)$ is positive as well (and so $\pi(j)=n$) is then
\beq\label{eq:piece} t^{j(n-j)} x_j\cdot\prod_{i=1}^{j-1}(1+t^{i-1}x_{i-1}y_{i})\prod_{i=j+2}^{n}(1+t^{n-i}x_{i-1}y_{i}).\eeq
To see this, first notice that the factor $x_j$ records the descent in position $j$. For $1\le i\le j-1$, each negative entry $\pi(i)$ creates a descent with $\pi(i-1)$ and inversions with all the preceding entries in $|\pi|$, contributing $t^{i-1}x_{i-1}y_{i}$.
For $j+2\le i\le n$, each negative entry $\pi(i)$ creates a descent with $\pi(i-1)$ and inversions with all the following entries in $|\pi|$, contributing $t^{n-i}x_{i-1}y_{i}$. In both cases, positive entries $\pi(i)$ just contribute a factor of $1$.

The contribution of permutations where $\pi(j)$ is negative is given by replacing $t^{j(n-j)} x_j$ with
$t^{j(n-j)} t^{j-1} x_{j-1}y_j$ in Equation~\eqref{eq:piece}, since now $\pi$ has a descent in position $j-1$, and $|\pi(j)|$ creates inversions with all the preceding entries in $|\pi|$.

\item If $\pi(j+1)$ is negative, we must have $\pi(j+1)=-n$. In this case, the contribution of permutations where $\pi(j)$ is negative (and so $\pi(j)=-1$) is
given by replacing $t^{j(n-j)} x_j$ with $t^{j-1} t^{n-j-1} x_{j-1}y_j y_{j+1}$ in Equation~\eqref{eq:piece}. Indeed, $\pi$ has a descent in position $j-1$, and there are inversions in $|\pi|$ between $|\pi(j)|$
and all the preceding entries, and between $|\pi(j+1)|$ and all the following entries.

Similarly, the contribution of permutations where $\pi(j)$ is positive is obtained by replacing $t^{j(n-j)} x_j$ with
$t^{n-j-1} x_{j}y_{j+1}$ in Equation~\eqref{eq:piece}, since $\pi$ has a descent in position $j-1$, and there are inversions in $|\pi|$ between $|\pi(j)|$
and all the preceding entries, and between $|\pi(j+1)|$ and all the following entries.
\end{itemize}

Adding all of the above contributions we obtain the stated formula.
\end{proof}


\subsection{The $(\fdes,\fmaj)$-enumerator}

\begin{corollary}\label{signed_arc_fmaj}
For every $n\ge2$,
$$\sum_{\pi\in\As_n} t^{\fdes(\pi)} q^{\fmaj(\pi)}=
(1+tq)\left(1+tq(1+q)+2t^2q^3[2n-3]_q+t^3q^{2n}(1+q)+t^4q^{2n+2}\right)\prod_{i=3}^{n-1} (1+t^2q^{2i-1}).$$
In particular,
$$\sum_{\pi\in\As_n}t^{\fdes(\pi)}=(1+t)(1+t^2)^{n-3}(1+2t+(4n-6)t^2+2t^3+t^4).$$
\end{corollary}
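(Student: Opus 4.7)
The plan is to specialize the joint $(\D,\Neg)$-generating function from Theorem~\ref{signed_Des_Neg_inv}, specifically Equation~\eqref{eq:signed_Des_Neg}, to track $(\fdes,\fmaj)$. Since a descent at position $i$ contributes $2$ to $\fdes$ and $2i$ to $\fmaj$, while a negative entry at position $i\ge 2$ contributes only $1$ to $\fmaj$, and a negative entry at position $1$ contributes $1$ to both $\fdes$ and $\fmaj$, the appropriate substitution is
$$x_i\mapsto t^2q^{2i},\qquad y_1\mapsto tq,\qquad y_i\mapsto q\ \text{for } i\ge 2.$$
Under this substitution, the product $\prod_{i=1}^n(1+x_{i-1}y_i)$ becomes $(1+tq)\prod_{i=2}^n(1+t^2q^{2i-1})$.

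Next, I would treat the summation over $j$ in~\eqref{eq:signed_Des_Neg} in two pieces. The $j=1$ summand simplifies because $x_1+x_0y_1=t^2q^2+tq=tq(1+tq)$, so the cancellation with the denominator $1+x_0y_1=1+tq$ yields the clean expression $tq(1+q)/(1+t^2q^3)$. For $j\ge 2$, all $y$'s equal $q$, and the summand reduces to
$$\frac{t^2q^{2j-1}(1+q)^2}{(1+t^2q^{2j-1})(1+t^2q^{2j+1})}.$$

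The key observation is the partial-fraction identity
$$\frac{t^2q^{2j-1}(1+q)^2}{(1+t^2q^{2j-1})(1+t^2q^{2j+1})}=\frac{1+q}{1-q}\left(\frac{1}{1+t^2q^{2j+1}}-\frac{1}{1+t^2q^{2j-1}}\right),$$
which makes the sum from $j=2$ to $n-1$ telescopic. After multiplying by the outer product $(1+tq)\prod_{i=2}^n(1+t^2q^{2i-1})$, the telescoping collapses to $(1+tq)(1+q)[2n-4]_q\,t^2q^3\,P$, where $P:=\prod_{i=3}^{n-1}(1+t^2q^{2i-1})$. Together with the contributions from the constant term $1$ and from $j=1$, one obtains $(1+tq)P$ times
$$(1+t^2q^3)(1+t^2q^{2n-1})+tq(1+q)(1+t^2q^{2n-1})+t^2q^3(1+q)[2n-4]_q.$$

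The last step is to verify that this bracket equals the target bracket $1+tq(1+q)+2t^2q^3[2n-3]_q+t^3q^{2n}(1+q)+t^4q^{2n+2}$. This reduces, after cancellation of the four obvious common monomials, to the elementary identity
$$1+q^{2n-4}+(1+q)[2n-4]_q=2[2n-3]_q,$$
which I would check by direct expansion. The ``In particular'' formula then follows by setting $q=1$: the product $\prod_{i=3}^{n-1}$ yields $(1+t^2)^{n-3}$ and the bracket simplifies to $1+2t+(4n-6)t^2+2t^3+t^4$. The main obstacle is mechanical bookkeeping of the telescoping rather than anything conceptual; once the partial-fraction form is spotted, the rest is routine algebra.
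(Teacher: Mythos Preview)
Your proposal is correct and follows essentially the same approach as the paper: the same substitution into Equation~\eqref{eq:signed_Des_Neg}, the same separation of the $j=1$ term, and the same partial-fraction telescoping of the $j\ge 2$ sum. You actually supply more detail than the paper on the final bracket simplification (the identity $1+q^{2n-4}+(1+q)[2n-4]_q=2[2n-3]_q$), and your numerator $(1+q)^2$ in the $j\ge 2$ summand is the correct value---the paper's displayed $(1+q^2)$ there is a typo, as its subsequent simplification confirms.
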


\begin{proof}
Substituting $y_1=tq$, $y_i=q$ for $2\le i\le n$, and
$x_i=t^2q^{2i}$ for $1\le i\le n-1$ in
Equation~\eqref{eq:signed_Des_Neg}, we get
$$\sum_{\pi\in\As_n} t^{\fdes(\pi)} q^{\fmaj(\pi)}=(1+tq)\prod_{i=2}^{n}(1+t^2q^{2i-1})\left(1+\frac{tq(1+q)}{1+t^2q^3}+\sum_{j=2}^{n-1} \frac{(1+q^2)t^2q^{2j-1}}{(1+t^2q^{2j-1})(1+t^2q^{2j+1})}\right).$$
Using that
$$\frac{t^2q^{2j-1}}{(1+t^2q^{2j-1})(1+t^2q^{2j+1})}=\frac{1}{1-q^2}\left(\frac{1}{1+t^2q^{2j+1}}-\frac{1}{1+t^2q^{2j-1}}\right),$$
the summation on the right-hand side becomes a telescopic sum that simplifies to
$$\frac{(1+q)t^2q^3[2n-4]_q}{(1+t^2q^3)(1+t^2q^{2n-1})}.$$
The first formula in the statement follows now from straightforward simplifications, and the second
formula is obtained by substituting $q=1$.
\end{proof}

\subsection{The signed $\fmaj$-enumerator}

Recall that $B_n$ has four one-dimensional characters: the trivial
character; the sign character $\sgn(\pi)$; $(-1)^{\nega(\pi)}$; and the
sign of $|\pi| \in \S_n$, denoted $\sgn(|\pi|)$.
Let us now compute the enumerators for signed arc permutations with respect to $\fmaj$ and each one of these characters.

\begin{corollary}\label{fmaj_signed_enumerate_s_arc}
For every $n\ge1$,
\begin{align}
\label{eq:signed_fmaj}
\sum_{\pi\in\As_n} q^{\fmaj(\pi)}&=[2n]_q \prod\limits_{i=1}^{n-1} (1+q^{2i-1}),\\
\label{eq:signed-signfmaj}
\sum_{\pi\in\As_n} \sgn(\pi) q^{\fmaj(\pi)}&=
\begin{cases}\displaystyle (1-q)[n]_{-q^2}\prod_{i=1}^{n-1}(1+(-1)^i q^{2i-1}) & \mbox{if $n$ is odd,} \\
\displaystyle [2n]_{q}\prod_{i=1}^{n-1}(1+(-1)^i q^{2i-1}) & \mbox{if $n$ is even,}\end{cases}\\
\nn
\sum_{\pi\in\As_n} (-1)^{\nega(\pi)} q^{\fmaj(\pi)}&=[2n]_{-q} \prod\limits_{i=1}^{n-1} (1-q^{2i-1}),\\
\nn
\sum_{\pi\in\As_n} \sgn(|\pi|) q^{\fmaj(\pi)}&=\begin{cases}\displaystyle (1+q)[n]_{-q^2}\prod_{i=1}^{n-1}(1+(-1)^{i-1} q^{2i-1}) & \mbox{if $n$ is odd,} \\ \displaystyle [2n]_{-q}\prod_{i=1}^{n-1}(1+(-1)^{i-1} q^{2i-1}) & \mbox{if $n$ is even.}\end{cases}
\end{align}
\end{corollary}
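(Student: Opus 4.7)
The plan is to specialize the bivariate enumerators of Theorem~\ref{signed_Des_Neg_inv} according to the identity
$$q^{\fmaj(\pi)}=\prod_{i\in\D(\pi)}q^{2i}\cdot\prod_{i\in\Neg(\pi)}q$$
and the character factorization
$$\sgn(\pi)=\sgn(|\pi|)\cdot(-1)^{\nega(\pi)}=(-1)^{\inv(|\pi|)+\nega(\pi)},$$
which expresses each of the four one-dimensional characters of $B_n$ as a sign choice in the auxiliary parameters. In every case I set $x_i=q^{2i}$; the sign of $y_i$ records the factor $(-1)^{\nega(\pi)}$, and the value $t=\pm1$ records the factor $\sgn(|\pi|)$.

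For the first and third formulas I would use \eqref{eq:signed_Des_Neg}, with $y_i=q$ and $y_i=-q$ respectively. After substitution, the interior sum has the same shape as in the proof of Corollary~\ref{signed_arc_fmaj}, with summands of the form
$$\frac{\pm q^{2j-1}}{(1\pm q^{2j-1})(1\pm q^{2j+1})}$$
that telescope via the partial-fraction decomposition used there. Collapsing and simplifying yields the claimed products with the $[2n]_q$ or $[2n]_{-q}$ factor respectively, together with the product $\prod_{i=1}^{n-1}(1+q^{2i-1})$ or $\prod_{i=1}^{n-1}(1-q^{2i-1})$.

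For the second and fourth formulas I would use \eqref{eq:signed_Des_Neg_inv} with $t=-1$, setting $y_i=-q$ in the former and $y_i=q$ in the latter. Here the computation splits by the parity of $n$ because, at $t=-1$, the factor $t^{j(n-j)}$ becomes $(-1)^{j(n-j)}$, which equals $(-1)^j$ when $n$ is even but $1$ when $n$ is odd. When $n$ is even, the summation telescopes in a single step, mirroring the even-$n$ case of Theorem~\ref{A:signed-maj} and producing the $[2n]_q$ (resp.\ $[2n]_{-q}$) factor along with the alternating-sign product. When $n$ is odd, I would decompose each summand as a difference of the form $(1-z^{j+1})-(1-(-1)^j z^j)$ to obtain a first telescoping whose residue is a sum restricted to a single parity of $j$; applying the identity
$$2z^j=\frac{(1+z^j)(1+z^{j+1})-(1-z^j)(1-z^{j+1})}{1+z}$$
then converts that residue into a second telescoping, which collapses to the $[n]_{-q^2}$ factor and the $(1\pm q)$ prefactor appearing in the odd-$n$ cases of the stated formulas.

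The main obstacle I expect is the odd-$n$ double telescoping in the two $\sgn(|\pi|)$-twisted enumerators. The algebra is delicate because only the summands at one parity of $j$ survive the first collapse, the residual sum does not simplify without the auxiliary identity above, and it is easy to lose a sign when toggling $y_i$ between $q$ and $-q$ to pass between the $\sgn(|\pi|)$ and $\sgn(\pi)$ formulas. Once this step faithfully parallels the odd-$n$ argument of Theorem~\ref{A:signed-maj}, the remaining manipulations are routine bookkeeping.
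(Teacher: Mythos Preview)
Your overall plan matches the paper's: specialize Theorem~\ref{signed_Des_Neg_inv} and telescope. Two points of comparison are worth making.

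First, the paper avoids half of your work with a one-line observation: since $\fmaj(\pi)=2\maj(\pi)+\nega(\pi)$, we have $(-q)^{\fmaj(\pi)}=(-1)^{\nega(\pi)}q^{\fmaj(\pi)}$, so the third and fourth identities follow from the first and second by the substitution $q\mapsto -q$. Your direct computation of all four is not wrong, just redundant.

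Second, and more importantly, you have the parity cases backwards in the $\sgn$ computation. The factor $(-1)^{j(n-j)}$ is not the only place where the parity of $n$ enters: after substituting $t=-1$, $x_i=q^{2i}$, $y_i=-q$ into \eqref{eq:signed_Des_Neg_inv}, the left product becomes $\prod_{i=1}^{j-1}(1+(-1)^{i}q^{2i-1})$ while the right product becomes $\prod_{i=j+2}^{n}(1+(-1)^{n-i+1}q^{2i-1})$. These two families of factors agree (so that the whole expression factors over a common product and collapses by a single partial-fraction telescope) precisely when $n$ is \emph{odd}; when $n$ is \emph{even} the signs in the two products are opposite, and one needs the two-stage collapse you sketched. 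Thus the roles of odd and even here are swapped relative to Theorem~\ref{A:signed-maj}, not parallel to it: the paper does the single telescope for odd $n$ and the double telescope (with the identity $2z^j=\frac{(1+z^j)(1+z^{j+1})-(1-z^j)(1-z^{j+1})}{1+z}$ applied at one parity of $j$) for even $n$. Once you swap the two cases, your outline goes through exactly as the paper's proof does.
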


\begin{proof}
Equation~\eqref{eq:signed_fmaj} for $n\ge2$ is obtained from Corollary~\ref{signed_arc_fmaj} by substituting $t=1$, and it is trivial for $n=1$.

To prove~\eqref{eq:signed-signfmaj}, we use that the sign of $\pi\in B_n$ can be expressed as $\sgn(\pi)= (-1)^{\nega(\pi)} \sgn(|\pi|)=(-1)^{\inv(|\pi|)+\nega(\pi)}$.
Substituting $t=-1$, $y_i=-q$ and $x_i=q^{2i}$ for all $i$ in Theorem~\ref{signed_Des_Neg_inv}, we obtain
\begin{multline}\label{eq:signed-subs}\sum_{\pi\in\As_n} \sgn(\pi) q^{\fmaj(\pi)}=\sum_{\pi\in\As_n} (-1)^{\inv(|\pi|)+\nega(\pi)} q^{\fmaj(\pi)}=\prod_{i=1}^{n}(1+(-1)^iq^{2i-1})\\
+\sum_{j=1}^{n-1}\left( q^{2j-1}(q+(-1)^j)((-1)^{j(n-j)}+(-1)^{n-j}q)\prod_{i=1}^{j-1}(1+(-1)^iq^{2i-1})
\prod_{i=j+2}^{n}(1+(-1)^{n-i+1}q^{2i-1})\right).\end{multline}

When $n$ is odd, the right-hand side of Equation~\eqref{eq:signed-subs} simplifies to
$$\prod_{i=1}^{n}(1+(-1)^iq^{2i-1})\left(1+\sum_{j=1}^{n-1} \frac{(-1)^j q^{2j-1} (1-q^2)}{(1+(-1)^jq^{2j-1})(1+(-1)^{j+1}q^{2j+1})}\right).$$
The summation in the above formula can be written as a telescopic sum
$$\frac{1-q^2}{1+q^2}\,\sum_{j=1}^{n-1}\left(\frac{1}{1+(-1)^{j+1}q^{2j+1}}-\frac{1}{1+(-1)^jq^{2j-1}}\right)=\frac{q(1+q)((-1)^{n-1}q^{2n-2}-1)}{(1+q^2)(1+(-1)^n q^{2n-1})},$$
from where we obtain the expression in the statement.

When $n$ is even, using the shorthand $a_j=\prod_{i=1}^{j}(1+(-1)^iq^{2i-1})$ and $b_j=\prod_{i=j}^{n}(1+(-1)^{i-1}q^{2i-1})$, we can write the right-hand side of Equation~\eqref{eq:signed-subs} as
\beq\label{eq:sums1}a_n+\sum_{j=1}^{n-1} q^{2j-1}(1+(-1)^jq^2)a_{j-1}b_{j+2}+\sum_{j=1}^{n-1} q^{2j-1}(1+(-1)^jq)a_{j-1}b_{j+2}.\eeq
Using that $q^{2j-1}(1+(-1)^jq^2)=(1+(-1)^jq^{2j+1})-(1-q^{2j-1})$, the first summation in Equation~\eqref{eq:sums1} simplifies as a telescopic sum
$$\sum_{j=1}^{n-1} \left(a_{j-1}b_{j+1}-(1-q^{2j-1})a_{j-1}b_{j+2}\right)=b_2-a_{n-1}+\sum_{\substack{j=2 \\ j\,\text{even}}}^{n-2}2q^{2j-1}a_{j-1}b_{j+2}.$$
Combining this expression with the fact that the second summation in Equation~\eqref{eq:sums1} can be written as
$$\sum_{\substack{j=2 \\ j\,\text{even}}}^{n-2} 2q^{2j}a_{j-1}b_{j+2},$$
Equation~\eqref{eq:sums1} equals
\beq\label{eq:sums2}a_n+b_2-a_{n-1}+(1+q)\sum_{\substack{j=2 \\ j\,\text{even}}}^{n-2}2q^{2j-1}a_{j-1}b_{j+2}.\eeq
Now, using that $$2q^{2j-1}=\frac{(1-q^{2j+1})(1+q^{2j-1})-(1+q^{2j+1})(1-q^{2j-1})}{1-q^2},$$ Equation~\eqref{eq:sums2} simplifies to
\begin{multline*}a_n+b_2-a_{n-1}+\frac{1}{1-q}\sum_{\substack{j=2 \\ j\,\text{even}}}^{n-2}(a_{j+1}b_{j+2}-a_{j-1}b_j)=a_n+b_2-a_{n-1}+\frac{a_{n-1}b_n-a_1b_2}{1-q}\\
=a_{n-1}\left((1+q^{2n-1})-1+\frac{b_n}{1-q}\right)+b_2\left(1-\frac{a_1}{1-q}\right)=[2n]_q\, a_{n-1}=[2n]_{q}\prod_{i=1}^{n-1}(1+(-1)^i q^{2i-1}),
\end{multline*}
as claimed.

Finally, the generating functions $\sum_{\pi\in\As_n} (-1)^{\nega(\pi)} q^{\fmaj(\pi)}$ and $\sum_{\pi\in\As_n} \sgn(|\pi|) q^{\fmaj(\pi)}$ are easily obtained by replacing $q$ with $-q$ in Equation~\eqref{eq:signed_fmaj} and in Equation~\eqref{eq:signed-signfmaj}, respectively.
\end{proof}

\section{$B$-arc permutations}

\subsection{Definition and basic properties}

In this section we introduce a different generalization of arc permutations to type $B$.

Let $\O_n$ be a circle with $2n$ points labeled $-1,-2,\dots,-n,1,2,\dots,n$ in clockwise order, as shown in Figure~\ref{fig:On}. One can think of these points as
the elements of $\bbz_{2n}$, where for every $1\le j\le n$, the letter $-j$ is identified with $n+j\in\bbz_{2n}$.

\begin{figure}[htb]
  \begin{center}
    \begin{tikzpicture}[scale=0.4]
    \draw (0,0) circle (3);
    \draw[fill] (72:3) circle (.1); \draw(72:3) node[above right]{$1$};
    \draw[fill] (36:3) circle (.1); \draw(36:3) node[right]{$2$};
    \draw[fill] (0:3) circle (.1);
    \draw[fill] (-36:3) circle (.1);
    \draw[fill] (-72:3) circle (.1); \draw(-72:3) node[below right]{$n$};
    \draw[fill] (-108:3) circle (.1); \draw(-108:3) node[below left]{$-1$};
    \draw[fill] (-144:3) circle (.1); \draw(-144:3) node[left]{$-2$};
    \draw[fill] (180:3) circle (.1);
    \draw[fill] (144:3) circle (.1);
    \draw[fill] (108:3) circle (.1); \draw(108:3) node[above left]{$-n$};
    \end{tikzpicture}
  \end{center}
  \caption{The circle $\O_n$.}\label{fig:On}
\end{figure}

\begin{definition}\label{def:B-arc}
A permutation $\pi=[\pi(1),\dots,\pi(n)]\in B_n$ is a {\em $B$-arc permutation} if, for every $1\le j\le n$, the suffix $\{\pi(j),\pi(j+1),\dots,\pi(n)\}$
forms an interval in $\O_n$.
Denote by $\AB_n$ the set of $B$-arc permutations in $B_n$.
\end{definition}

\begin{example} We have that $[-2,3,-1]\in\AB_3$ and $[2,-1,4,3]\in\AB_4$, but $[-3,-1,2]\notin\AB_3$ and $[5,2,-1,4,3]\notin\AB_5$.
\end{example}

\begin{remark}{\rm While for permutations in $\S_n$ the suffix is an interval if and only if the prefix is an interval, this is not the case in $B_n$.
If we replaced {\em suffix} with {\em prefix} in Definition~\ref{def:B-arc}, the corresponding formulas in
Sections~\ref{approach-B} and~\ref{descents-B-arc} would be less elegant.}
\end{remark}

\begin{claim}\label{B-arc-enumeration}
For $n\ge1$, $|\AB_n|=n2^{n}$.
\end{claim}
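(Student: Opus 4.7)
The strategy is to build each element of $\AB_n$ uniquely by choosing the entries from right to left and to count how many choices are made at each stage. Throughout I would rely on one structural observation about the circle $\O_n$: since $\O_n$ has $2n$ points and the labels $i$ and $-i$ sit at antipodal positions (at distance $n$ apart on the cycle), two points of $\O_n$ have the same absolute value if and only if they are antipodal. Consequently, every arc of $\O_n$ of length at most $n$ has pairwise distinct absolute values on its points.

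For $\pi \in \AB_n$ and $j \in \{1, \dots, n\}$, write $S_j := \{\pi(j), \pi(j+1), \dots, \pi(n)\}$, which by definition of $\AB_n$ is an arc of length $n - j + 1$ on $\O_n$. My plan is to enumerate in reverse: first choose $\pi(n)$, producing the singleton arc $S_n$, and then successively choose $\pi(n-1), \pi(n-2), \dots, \pi(1)$ so that each $S_j$ is a one-step extension of $S_{j+1}$. I would then show (i) there are $2n$ choices for $\pi(n)$, and (ii) for each $1 \le j \le n-1$, there are exactly $2$ valid choices for $\pi(j)$, namely the two points of $\O_n$ immediately adjacent to the arc $S_{j+1}$ on either of its ends.

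Step (ii) is the only place that really needs a moment of thought, and I would call it the main obstacle: I must check that the two candidate endpoints are distinct and that both give a valid signed-permutation step, i.e.\ introduce no repeated absolute value. Distinctness holds whenever the complement of $S_{j+1}$ contains at least two points, which is automatic since $|S_{j+1}| \le n - 1$. Absence of collision is where the structural observation is used: since $|S_j| \le n$, the points of $S_j$ have pairwise distinct absolute values, so the newly inserted entry necessarily carries a fresh absolute value. Once this is in place, the rest is bookkeeping.

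Finally, the reverse direction is free. Any sequence of choices produced this way yields a word with the suffix-arc property by construction, and the final arc $S_1$ has length exactly $n$, hence contains exactly one point from each antipodal pair, so $|\pi(1)|, \dots, |\pi(n)|$ is a permutation of $\{1, \dots, n\}$ and $\pi \in B_n$. Multiplying the counts gives $|\AB_n| = 2n \cdot 2^{n-1} = n \cdot 2^n$.
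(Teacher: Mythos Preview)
Your proof is correct and follows exactly the same right-to-left counting argument as the paper's proof: $2n$ choices for $\pi(n)$ and $2$ choices for each subsequent entry. You have simply spelled out the details the paper leaves implicit---that the two extension points are distinct and that no absolute value is repeated because an arc of length at most $n$ in $\O_n$ meets each antipodal pair at most once---so there is nothing to add.
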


\begin{proof}
Writing the entries of $\pi\in\AB_n$ from right to left, there are $2n$ choices for $\pi(n)$, and $2$ choices for each entry thereafter, since every suffix has to be be an interval in $\O_n$.
\end{proof}

\subsection{Characterization by pattern avoidance}

Paralleling our results for signed arc permutations, we can characterize $B$-arc permutations in terms of pattern avoidance.

\begin{theorem}\label{B_arc_pattern}
A permutation $\pi\in B_n$ is a $B$-arc permutation if and only if it avoids the following $24$ patterns:
\begin{align*} & [\pm2,1,3],[\pm2,3,1],[\pm3,1,-2],[\pm3,-2,1],[\pm1,2,-3],[\pm1,-3,2],\\
& [\pm2,-1,-3],[\pm2,-3,-1],[\pm3,-1,2],[\pm3,2,-1],[\pm1,-2,3],[\pm1,3,-2].
\end{align*}
\end{theorem}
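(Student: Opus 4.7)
The plan is to follow the template of the proof of Theorem~\ref{signed_arc_pattern}, with the cycle $\O_n$ in place of $\bbz_n$ and the suffix condition of Definition~\ref{def:B-arc} in place of the prefix condition of Definition~\ref{def_arc}. As a preliminary step, I would verify the clean characterization that the $24$ listed patterns are precisely those $\sigma \in B_3$ for which $\sigma(2)$ and $\sigma(3)$ are non-adjacent on $\O_3$: there are $12$ such non-adjacent ordered pairs of elements with distinct absolute values, and for each one the two sign choices for $\sigma(1)$ give two patterns. I would also note that pattern containment preserves the cyclic order of the three relevant points when passing between $\O_3$ and $\O_n$, since position on either cycle is determined by sign and by the relative rank of absolute value.

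For the forward direction, suppose $\pi \in \AB_n$ contains an occurrence $\pi(i_1)\pi(i_2)\pi(i_3)$ of a forbidden pattern. Then $I_1 := \{\pi(i_1),\dots,\pi(n)\}$ is an arc in $\O_n$ of size at most $n$ containing the three values, and $I_2 := \{\pi(i_2),\dots,\pi(n)\}$ is a sub-arc of size at most $n-1$ containing $\pi(i_2),\pi(i_3)$ but not $\pi(i_1)$. For each of the $24$ forbidden patterns, a short computation of the three arc-lengths that the positions of $\pi(i_1),\pi(i_2),\pi(i_3)$ determine in $\O_n$ shows that either (a) all three lengths are at most $n-1$, so the smallest arc containing the three points has size at least $n+1$, contradicting $|I_1|\le n$; or (b) $\pi(i_1)$ lies on the shorter of the two circular paths from $\pi(i_2)$ to $\pi(i_3)$, so that any arc of size at most $n-1$ containing $\pi(i_2)$ and $\pi(i_3)$ (in particular $I_2$) must also contain this shorter path and therefore $\pi(i_1)$, contradicting $\pi(i_1)\notin I_2$.

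For the backward direction, suppose $\pi \notin \AB_n$, and let $j^*$ be the largest index such that $\{\pi(j^*),\dots,\pi(n)\}$ is not an arc in $\O_n$. Write $A := \{\pi(j^*+1),\dots,\pi(n)\}$ and $c := \pi(j^*)$; then $A$ is an arc, $c \notin A$, and $c$ is not adjacent in $\O_n$ to either endpoint of $A$. This last fact implies the stronger statement that the two $\O_n$-neighbors $c-1,c+1$ of $c$ both lie in $A^c \setminus \{c\}$. When $j^* \ge 2$, I take $i_2 := j^*$ and choose $i_3 > j^*$ so that $\pi(i_3)$ is a suitable endpoint of $A$, together with $i_1 < j^*$ so that $|\pi(i_1)|$ has the rank among $\{|\pi(i_1)|, |c|, |\pi(i_3)|\}$ that makes the normalized pair $(\sigma(2),\sigma(3))$ non-adjacent in $\O_3$; such a rank is available in $\{|\pi(1)|,\dots,|\pi(j^*-1)|\} = \{1,\dots,n\}\setminus(\{|c|\}\cup\{|a|:a\in A\})$ by case analysis on the signs of $c$ and of the endpoints of $A$. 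When $j^* = 1$, no earlier index exists, but then both $c-1$ and $c+1$ lie outside $V := \{\pi(1),\dots,\pi(n)\}$, forcing $-(c-1),-(c+1) \in A$, and the triple $(c,-(c+1),-(c-1))$ (ordered by position in $\pi$) can be checked to normalize to a forbidden pattern for every configuration of $c$ in $\O_n$.

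The main obstacle is the case analysis in the backward direction, especially for $j^* \ge 2$: the correct endpoint of $A$ to use as $\pi(i_3)$ and the target rank for $|\pi(i_1)|$ depend on the signs of $c$ and of the endpoints of $A$, and on whether the arc $A$ lies entirely within one half of $\O_n$ or straddles the boundary between the positive and negative halves of the cycle.
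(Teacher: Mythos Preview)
Your overall strategy---find the maximal bad suffix for one direction, and show a specific suffix fails to be an arc for the other---matches the paper's, but the execution is substantially heavier than necessary, and one of your cases is in fact vacuous.

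\medskip

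\textbf{Forward direction.} The paper does not split into your cases (a) and (b), and it never needs the longer suffix $I_1$. The single observation used is that for each forbidden pattern the points $\pm\pi(i_1)$ separate $\pi(i_2)$ from $\pi(i_3)$ on $\O_n$; this is inherited from the same separation property in $\O_3$, which is exactly the statement that $\sigma(2)$ and $\sigma(3)$ are non-adjacent there. Hence \emph{every} arc containing $\pi(i_2)$ and $\pi(i_3)$ contains $\pi(i_1)$ or $-\pi(i_1)$. Since the suffix $\{\pi(i_2),\dots,\pi(n)\}$ omits both (its absolute values miss $|\pi(i_1)|$), it cannot be an arc. Your case (b) is essentially this argument when $\pi(i_1)$ itself lies on the short arc; your case (a) handles the situations where it is $-\pi(i_1)$ that does, at the cost of invoking $I_1$ and a per-pattern computation. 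The unified separation observation makes both of these unnecessary.

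\medskip

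\textbf{Backward direction.} Two simplifications you are missing:
\begin{itemize}
\item You may always take $i_3=j^*+1$. Since the suffixes are built one element at a time, $\pi(j^*+1)$ is automatically an endpoint of the arc $A$, so there is no need to search for a ``suitable'' endpoint.
\item Rather than hunting for a rank for $|\pi(i_1)|$, the paper exhibits an explicit value: because $\pi(j^*)$ is at distance at least $2$ from $A$ and $j^*\ge 2$, there exists $k$ with $\pm k\notin\{\pi(j^*),\dots,\pi(n)\}$ such that $k$ and $-k$ lie on opposite arcs between $\pi(j^*)$ and $\pi(j^*+1)$. Taking $i_1$ with $\pi(i_1)=\pm k$ immediately yields a forbidden occurrence. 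Two symmetries ($\AB_n$ is closed under left multiplication by $[n,n-1,\dots,1]$ and under global sign change) then reduce the verification to three short cases, in place of your full case analysis on signs of $c$ and of the endpoints of $A$.
\end{itemize}

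\medskip

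\textbf{The case $j^*=1$ is impossible.} If $A=\{\pi(2),\dots,\pi(n)\}$ were an arc of size $n-1$, its complement in $\O_n$ would be an arc of $n+1$ points containing both $\pi(1)$ and $-\pi(1)$. These two are antipodal (distance $n$) in $\O_n$, hence they must be the two endpoints of this complement arc, i.e.\ each is adjacent to $A$. Thus $\{\pi(1)\}\cup A$ is an arc, contradicting the choice of $j^*$. So your separate treatment of $j^*=1$ is unnecessary (and the paper tacitly relies on $j^*\ge2$ when it asserts the existence of $k$).

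\medskip

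In short, your plan is not wrong, but the paper's argument is shorter and avoids the case analyses you flag as the ``main obstacle''.
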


Note that the patterns listed in the above theorem are precisely those of the form $[a,b,c]\in B_3$ where $b$ and $c$ are at distance at least $2$ in the circle $\O_3$.

\begin{proof}
If $\pi\in B_n$ contains an occurrence $\pi(i_1)\pi(i_2)\pi(i_3)$ of a pattern $[a,b,c]\in B_3$, where $b$ and $c$ are at distance at least $2$ in $\O_3$,
then the suffix $\{\pi(i_2),\pi(i_2+1),\dots,\pi(n)\}$ is not an interval in $\O_n$, since it contains the letters $\pi(i_2)$ and $\pi(i_3)$, but neither of the letters $\pm\pi(i_1)$.
Thus, $\pi\notin\AB_n$.

For the converse, suppose now that $\pi\in B_n$ is not a $B$-arc permutation. Take the largest $j$ such that $\{\pi(j),\pi(j+1),\dots,\pi(n)\}$ is not an interval in $\O_n$. Then $\pi(j)$ is at distance
at least 2 from the interval $\{\pi(j+1),\pi(j+2),\dots,\pi(n)\}$ in the circle $\O_n$.
It follows that there is some value $1\le k\le n$ such that $\pm k\notin \{\pi(j),\pi(j+1),\dots,\pi(n)\}$, but any interval containing $\pi(j)$ and $\pi(j+1)$ must also contain either $k$ or $-k$.
Let $i$ be such that $\pi(i)=\pm k$, and note that $1\le i< j$. We claim that the subsequence $\pi(i)\pi(j)\pi(j+1)$ is an occurrence of one of the patterns in the statement.
Noticing that $\AB_n$ is invariant under left multiplication
by $[n,n-1,\dots,1]$, we can assume without loss of generality
that $|\pi(j)|<|\pi(j+1)|$.
Additionally, by symmetry (reversing the
signs if necessary), we can assume that $\pi(j)>0$. Now these are
the possibilities:
\begin{itemize}
\item if $0<\pi(j+1)$ then $0<\pi(j)<k<\pi(j+1)$, so
$\pi(i)\pi(j)\pi(j+1)$ is an occurrence of $[\pm2,1,3]$; \item if
$-k<\pi(j+1)<0$ then $\pi(j)<k$, so $\pi(i)\pi(j)\pi(j+1)$ is an
occurrence of $[\pm3,1,-2]$; \item if $\pi(j+1)<-k$ then
$\pi(j)>k$, so $\pi(i)\pi(j)\pi(j+1)$ is an occurrence of
$[\pm1,2,-3]$.
\end{itemize}
\end{proof}

\subsection{Canonical expressions and signed enumeration}
\label{sec:another_approach}

In this subsection we characterize arc permutations and $B$-arc
permutations in terms of their canonical expressions. This
characterization is then applied to derive the unsigned and signed
flag-major index enumerators.

\subsubsection{
The type $A$ case}\label{approach-A}

For a positive integer $1\le m<n$ let $c_m:=\sigma_{m}
\sigma_{m-1} \cdots \sigma_1=(m+1,m,\dots,2,1)$, in cycle notation. 
Every permutation $\pi\in \S_n$ has a unique expression
$$
\pi = c_{n-1}^{k_{n-1}}c_{n-2}^{k_{n-2}}\cdots c_{1}^{k_{1}},
$$
with $0\le k_i\le i$ for all $1\le i<n-1$. Recall
from~\cite{AR-fm} that
\begin{equation}\label{eq-coxeter-maj}
\maj(\pi)=\sum\limits_{i=1}^{n-1} k_i.
\end{equation}
Indeed, in the above expression for $\pi$, each multiplication by
$c_m$ from the left rotates the values $1,2,\dots,m+1$ cyclically.
Changing the value $1$ to $m+1$ has the effect of moving a descent
one position to the right, while the other descents remain
unchanged.

\begin{proposition}\label{coxeter-arc}
A permutation $\pi\in \S_n$ is an arc permutation if and only if
$$
\pi = c_{n-1}^{k_{n-1}}c_{n-2}^{k_{n-2}}\cdots c_{1}^{k_{1}},
$$
with $0\le k_{n-1}\le n-1$ and $k_i\in\{0,i\}$ for all $1\le i\le
n-2$.
\end{proposition}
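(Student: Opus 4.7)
The plan is to reduce the problem to the analogous description of left-unimodal permutations and then exploit the cyclic symmetry of the arc condition to peel off the leading factor $c_{n-1}^{k_{n-1}}$.

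First I would establish a preliminary lemma: a permutation $\tau \in \S_n$ is left-unimodal if and only if $\tau = c_{n-1}^{k_{n-1}} c_{n-2}^{k_{n-2}} \cdots c_1^{k_1}$ with $k_i \in \{0, i\}$ for every $1 \le i \le n-1$. The proof is by induction on $n$. The key observation is that in the canonical expression (whose uniqueness follows from the coset decomposition of $\S_{m+1}$ modulo the stabilizer of $m+1$, with coset representatives $c_m^k$), one has $\tau(n) = n - k_{n-1}$, so $k_{n-1}$ is determined by $\tau(n)$. A left-unimodal $\tau$ must satisfy $\tau(n) \in \{1, n\}$ (the length $n-1$ prefix being an integer interval of size $n-1$), which corresponds exactly to $k_{n-1} \in \{0, n-1\}$. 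In either case one verifies that $c_{n-1}^{-k_{n-1}} \tau$ fixes $n$ and, after a uniform shift on the first $n-1$ values, restricts to a left-unimodal permutation in $\S_{n-1}$ to which the inductive hypothesis applies.

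For the main statement, given $\pi \in \A_n$, define $k_{n-1} \in \{0, 1, \dots, n-1\}$ by $\pi(n) = n - k_{n-1}$ (taken in $\{1, \dots, n\}$) and set $\sigma = c_{n-1}^{-k_{n-1}} \pi$. Left multiplication by $c_{n-1}$ acts on values by the cyclic rotation $j \mapsto j-1 \pmod n$, which preserves the collection of intervals in $\bbz_n$. Hence $\sigma$ is again an arc permutation, and by construction $\sigma(n)=n$. Since an interval in $\bbz_n$ that omits $n$ is automatically an interval in $\bbz$, the prefixes of $\sigma$ are integer intervals, so $\sigma$ is left-unimodal. The preliminary lemma applied to $\sigma$, combined with $\sigma(n)=n$ forcing the top exponent to vanish, gives $\sigma = c_{n-2}^{k_{n-2}} \cdots c_1^{k_1}$ with $k_i \in \{0,i\}$ for $1 \le i \le n-2$, and thus $\pi = c_{n-1}^{k_{n-1}} \sigma$ has the desired form. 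The converse direction runs backward: if $\sigma = c_{n-2}^{k_{n-2}} \cdots c_1^{k_1}$ with $k_i \in \{0,i\}$, the lemma yields $\sigma$ left-unimodal hence arc, and the same cyclic-invariance argument shows that $c_{n-1}^{k_{n-1}} \sigma \in \A_n$.

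The main obstacle is the bookkeeping in the inductive step of the preliminary lemma when $\tau(n) = 1$: after peeling off $c_{n-1}^{n-1}$, the residual permutation differs from $\tau$ by a uniform shift $j \mapsto j+1$ on the first $n-1$ values (with $n$ wrapping to $1$ at the last position), and one must check carefully that this bijectively identifies left-unimodal permutations of $\{2,\dots,n\}$ with left-unimodal permutations of $\{1,\dots,n-1\}$. This amounts to verifying that integer intervals are preserved by the shift, which is transparent but deserves to be written out explicitly.
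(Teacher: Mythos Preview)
Your proof is correct and follows essentially the same strategy as the paper: characterize left-unimodal permutations via their canonical expression by induction, then use the invariance of the arc condition under the cyclic rotation $c_{n-1}$ to peel off the leading factor. The only cosmetic difference is in the inductive step for left-unimodal permutations---the paper passes through the inverse (noting that $u\in\LL_n$ iff $\D(u^{-1})=\{1,\dots,j\}$ for some $j$, and inducts by inserting $n$ at either end of $u^{-1}$), whereas you argue directly from the value $\tau(n)$ and the shift on values induced by $c_{n-1}^{\pm(n-1)}$; your formulation is in fact slightly cleaner since it captures all $2^{n-1}$ left-unimodal permutations at once.
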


\begin{proof}
First, notice that a permutation in $\S_n$ is an arc permutation
if and only if it may obtained by rotation of the values of  a
left-unimodal permutation, namely, $\pi = c_{n-1}^k u$ for some
$u\in \LL_n$. Next,  a permutation $u\in \S_n$ is left-unimodal if
and only if its inverse has descent set $\{1,2,\dots,j\}$ for some
$1\le j\le n$. Equivalently, its inverse may be obtained from a permutation whose inverse
is in $\LL_{n-1}$ by inserting the letter $n$ at the beginning or at the end.
Hence, by induction on $n$, we have that $u\in \LL_n$ if and only if it has the form
$$
u = c_{n-2}^{k_{n-2}}c_{n-2}^{k_{n-2}}\cdots c_{1}^{k_{1}},
$$
with $k_i\in\{0,i\}$ for all $1\le i\le n-2$.
\end{proof}

\bigskip


The above characterization can be used to give a short algebraic proof of Theorem~\ref{A:signed-maj}.

\begin{proof}[Alternate proof of Theorem~\ref{A:signed-maj}]. Let
$\chi$ be a one-dimensional character of the symmetric group~$\S_n$.
Let $K_n:=\{\mathbf{k}=(k_1,\dots,k_{n-1}):\ 0\le
k_{n-1}\le n-1,\ k_i\in\{0,i\}\mbox{ for }1\le i\le n-2\}$.

By Proposition~\ref{coxeter-arc} and Equation~\eqref{eq-coxeter-maj},
\begin{multline*}
\sum\limits_{\pi\in \A_n} \chi(\pi) q^{\maj(\pi)}=
\sum\limits_{\mathbf{k}\in K_n} \chi(c_{n-1}^{k_{n-1}}\cdots
c_{1}^{k_{1}})q^{\maj(c_{n-1}^{k_{n-1}}\cdots c_{1}^{k_{1}})}
=\sum\limits_{\mathbf{k}\in K_n} \chi(c_{n-1}^{k_{n-1}}\cdots c_{1}^{k_{1}})
q^{\sum k_i}\\
=\sum\limits_{\mathbf{k}\in K_n} \prod\limits_{i=1}^{n-1}\chi(c_{i})^{k_{i}}\, q^{\sum k_i}
=\sum\limits_{\mathbf{k}\in K_n} \prod\limits_{i=1}^{n-1}(\chi(c_{i})q)^{k_i} =
\sum\limits_{k_{n-1}=0}^{n-1}(\chi(c_{n-1})q)^{k_{n-1}}
\prod\limits_{i=1}^{n-2}(1+\chi(c_{i})^i q^{i})\\
=
\begin{cases}\displaystyle [n]_q\prod_{i=1}^{n-1}(1+ q^i) & \mbox{if $\chi$ is the trivial character,} \\
\displaystyle  [n]_{(-1)^{n-1}q} \prod\limits_{i=1}^{n-2} (1+
(-q)^i) & \mbox{if $\chi$ is the sign character.}\end{cases}
\end{multline*}
\end{proof}

\subsubsection{
The type $B$ case}\label{approach-B}

In analogy with the formulas in Corollary~\ref{fmaj_signed_enumerate_s_arc} for signed arc permutations,
in this section we give formulas enumerating $B$-arc permutations with respect to $\fmaj$ and each one of the four one-dimensional characters in type $B$.

We will use the following characterization of $B$-arc permutations, analogous to the characterization of arc permutations given in Proposition~\ref{coxeter-arc}.

For a positive integer $0\le m<n$ let now
$$c_m:=\sigma_{m}\sigma_{m-1} \cdots \sigma_1\sigma_0=[-(m+1),1,2,\dots,m,m+2,\cdots,n].$$
Note that $c_m=(m+1,m,\dots,1,-(m+1),-m,\dots,-1)$ in cycle notation, and it has order $2m+2$.
Every $\pi\in B_n$ has a unique expression
$$
\pi = c_{n-1}^{k_{n-1}}c_{n-2}^{k_{n-2}}\cdots c_{1}^{k_{1}}
c_0^{k_0},
$$
with $0\le k_i\le 2i+1$ for all $0\le i<n$.
Recall from~\cite{AR-fm} that
\begin{equation}\label{eq-coxeter-fmaj_B}
\fmaj(\pi)=\sum\limits_{i=0}^{n-1} k_i.
\end{equation}

\begin{proposition}\label{coxeter-arc-B}
A permutation $\pi\in B_n$ is a $B$-arc permutation if and only if
$$
\pi = c_{n-1}^{k_{n-1}}c_{n-2}^{k_{n-2}}\cdots
c_{1}^{k_{1}}c_0^{k_0},
$$
with $0\le k_{n-1}\le 2n-1$ and $k_i\in\{0,2i+1\}$ for all $0\le
i\le n-2$.
\end{proposition}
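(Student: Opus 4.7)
The proof will mirror that of Proposition~\ref{coxeter-arc}, exploiting the fact that $c_{n-1}$ is a rotation of the circle $\O_n$. First I would verify that $c_{n-1}=(n,n-1,\ldots,1,-n,-n+1,\ldots,-1)$ acts on $[\pm n]$ as a counterclockwise rotation of $\O_n$ by one step; since rotations send intervals to intervals, left multiplication by any power of $c_{n-1}$ maps $\AB_n$ to itself. Moreover $c_{n-1}$ is a $2n$-cycle, so every $\pi\in B_n$ admits a unique factorization $\pi=c_{n-1}^{k_{n-1}}u$ with $k_{n-1}\in\{0,\ldots,2n-1\}$ and $u(n)=n$, and by the invariance property $\pi\in\AB_n$ iff $u\in\AB_n$. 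Setting $\AB_n^*:=\{u\in\AB_n:u(n)=n\}$, Claim~\ref{B-arc-enumeration} gives $|\AB_n^*|=2^{n-1}$.

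It therefore remains to identify $\AB_n^*$ with the set of products $c_{n-2}^{k_{n-2}}\cdots c_0^{k_0}$ having $k_i\in\{0,2i+1\}$ for all $0\le i\le n-2$. There are exactly $2^{n-1}$ such products, and by the uniqueness of canonical expressions in $B_n$ they are pairwise distinct. By a cardinality argument it is enough to prove that each of them lies in $\AB_n^*$, which I would do by induction on $n$, the base case $n=1$ being trivial.

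For the inductive step, write $u=c_{n-2}^{k_{n-2}}v$ where $v=c_{n-3}^{k_{n-3}}\cdots c_0^{k_0}$. The inductive hypothesis, applied in $B_{n-1}$ with outer exponent zero, gives $v\in\AB_{n-1}^*$, so each suffix of $v$ (as an element of $B_{n-1}$) is an interval of $\O_{n-1}$ containing $n-1$, and hence has the form $\{n-1-a,\ldots,n-1\}\cup\{-1,\ldots,-b\}$ for some $a,b\ge 0$ (with either piece possibly empty). In the case $k_{n-2}=0$ we have $u=v$ viewed inside $B_n$ with $u(n)=n$; adjoining $n$ to the above suffix yields $\{n-1-a,\ldots,n-1,n,-1,\ldots,-b\}$, which is an interval of $\O_n$ since in $\O_n$ the element $n$ is inserted precisely between $n-1$ and $-1$. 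In the case $k_{n-2}=2n-3$, the key observation is that $c_{n-2}^{-1}$ acts on $[\pm(n-1)]$ as a clockwise rotation of $\O_{n-1}$ (fixing $n$), so the suffixes of $c_{n-2}^{-1}v$ in $B_{n-1}$ are again intervals of $\O_{n-1}$, now containing $c_{n-2}^{-1}(n-1)=-1$; adjoining $n$ converts these to intervals of $\O_n$ by the same insertion argument.

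The main obstacle I expect is the second case of the induction, specifically the verification that $c_{n-2}^{-1}$ really is the clockwise rotation of $\O_{n-1}$ and that intervals of $\O_{n-1}$ containing $-1$ extend to intervals of $\O_n$ upon adjoining $n$. Once this is in place the remaining steps are routine, and the characterization of $\AB_n$ follows by combining the rotation reduction with the inductive identification of $\AB_n^*$.
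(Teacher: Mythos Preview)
Your proposal is correct and follows essentially the same strategy as the paper: establish that left multiplication by $c_{n-1}$ is a rotation of $\O_n$ and hence preserves $\AB_n$, show by an inductive argument that every product $c_{n-2}^{k_{n-2}}\cdots c_0^{k_0}$ with $k_i\in\{0,2i+1\}$ lies in $\AB_n$ (and fixes $n$), and conclude equality by a cardinality count. The only difference is cosmetic: the paper performs its induction inside a fixed $B_n$, observing that if $\pi\in\AB_n$ satisfies $\pi(j)=j$ for all $j>i$ then $c_i^{-1}\pi\in\AB_n$ as well, and builds up the products starting from the identity; you instead induct on $n$, passing through $\AB_{n-1}^*$ and checking that adjoining the new letter $n$ between $n-1$ and $-1$ preserves the interval property. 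Both arguments are short and yield the same containment, so neither buys anything substantial over the other.
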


\begin{proof}
For every $0\le i<n$, if $\pi\in \AB_n$ is such that $\pi(j)=j$ for all $j>i$,
the permutation $c_i^{2i+1}\pi=c_i^{-1}\pi$ is also a
$B$-arc permutation. It follows by induction that $c_{n-2}^{k_{n-2}}\cdots
c_0^{k_0}\in\AB_n$ for all choices of $k_i\in\{0,2i+1\}$. Next,
notice that $\AB_n$ is invariant under left multiplication by $c_{n-1}$,
since this operation is a counterclockwise rotation of the letters in $\O_n$.
One concludes that
$$
\{c_{n-1}^{k_{n-1}}c_{n-2}^{k_{n-2}}\cdots c_0^{k_0}:\ 0\le k_n\le
2n-1\ \text{ and }\  k_i\in\{0,2i+1\} \ \text{ for all } 0\le i<n
\ \}\subseteq \AB_n.
$$
Finally, we prove that these two sets are equal by showing that they have the same cardinality.
The set on the left-hand side has size $n2^n$, because each choice of the $k_i$
yields a different element of $B_n$. By Claim~\ref{B-arc-enumeration},
this coincides with the cardinality of $\AB_n$.
\end{proof}

Product formulas for unsigned, signed and other one-dimensional
character enumerators for the flag-major index follow.

\begin{theorem}\label{B-enumerators}
For every $n\ge 1$,
\begin{align}
\sum\limits_{\pi\in \AB_n} q^{\fmaj(\pi)}&=[2n]_q
\prod\limits_{i=1}^{n-1} (1+q^{2i-1}),\label{eq:fmaj}\\
\nn
\sum\limits_{\pi\in \AB_n} \sgn(\pi) q^{\fmaj(\pi)}
&=[2n]_{(-1)^nq} \prod\limits_{i=1}^{n-1}
(1+(-1)^{i}q^{2i-1}),\\
\nn
\sum\limits_{\pi\in \AB_n} (-1)^{\nega(\pi)} q^{\fmaj(\pi)}
&=[2n]_{-q} \prod\limits_{i=1}^{n-1} (1-q^{2i-1}),\\
\nn
\sum\limits_{\pi\in \AB_n} \sgn(|\pi|) q^{\fmaj(\pi)}
&=[2n]_{(-1)^{n-1}q} \prod\limits_{i=1}^{n-1} (1+(-1)^{i-1}q^{2i-1}).
\end{align}
\end{theorem}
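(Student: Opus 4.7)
The plan is to mirror the alternate proof of Theorem~\ref{A:signed-maj} given in Section~\ref{approach-A}, substituting Proposition~\ref{coxeter-arc-B} for Proposition~\ref{coxeter-arc} and equation~\eqref{eq-coxeter-fmaj_B} for equation~\eqref{eq-coxeter-maj}. Let $\chi$ denote any one of the four one-dimensional characters of $B_n$, and set
\[K_n:=\{(k_0,\ldots,k_{n-1}):\ 0\le k_{n-1}\le 2n-1,\ k_i\in\{0,2i+1\}\text{ for }0\le i\le n-2\}.\]
Proposition~\ref{coxeter-arc-B} provides a bijection $\mathbf{k}\mapsto c_{n-1}^{k_{n-1}}\cdots c_0^{k_0}$ from $K_n$ onto $\AB_n$, and \eqref{eq-coxeter-fmaj_B} says that the $\fmaj$ of this product equals $\sum_i k_i$. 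Since $\chi$ is a homomorphism into $\{\pm1\}$, one has $\chi(c_{n-1}^{k_{n-1}}\cdots c_0^{k_0})=\prod_{i=0}^{n-1}\chi(c_i)^{k_i}$, so the generating function factors as
\[\sum_{\pi\in\AB_n}\chi(\pi)\,q^{\fmaj(\pi)} = \left(\sum_{k=0}^{2n-1}(\chi(c_{n-1})q)^k\right)\prod_{i=0}^{n-2}\left(1+(\chi(c_i)q)^{2i+1}\right).\]

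Next I would simplify: since $\chi(c_i)\in\{\pm1\}$ and each exponent $2i+1$ is odd, the $i$-th factor in the product collapses to $1+\chi(c_i)q^{2i+1}$, while the leading factor equals $[2n]_{\chi(c_{n-1})q}$. Reindexing $j=i+1$ turns the product into $\prod_{j=1}^{n-1}(1+\chi(c_{j-1})q^{2j-1})$, which has exactly the shape of each formula in the statement. It then remains only to read off $\chi(c_i)$ in the four cases. From the one-line expression $c_i=[-(i+1),1,2,\ldots,i,i+2,\ldots,n]$, one sees that $\nega(c_i)=1$ and $\inv(|c_i|)=i$, so that $\chi(c_i)$ equals $1$, $-1$, $(-1)^i$, and $(-1)^{i+1}$ for the trivial character, the character $(-1)^{\nega}$, the character $\sgn(|\cdot|)$, and the sign character (using $\sgn=(-1)^{\inv(|\cdot|)+\nega}$), respectively. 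Substituting each of these values into the displayed formula yields the four identities in the theorem.

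The step that needs the most care is the computation of $\chi(c_i)$ for each character, together with the verification that all four candidate maps really are homomorphisms $B_n\to\{\pm1\}$ (so that $\chi(c_i^{k_i})=\chi(c_i)^{k_i}$ and the overall product factors); this is standard but is the only non-bookkeeping ingredient. Once it is granted, there is no real obstacle: the set $K_n$ has a pure product structure, the map $\mathbf{k}\mapsto\sum k_i$ linearizes $\fmaj$, and the remaining identity reduces to recognizing a finite geometric sum $[2n]_{\chi(c_{n-1})q}$ times $n-1$ independent binomial choices.
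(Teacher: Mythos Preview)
Your proposal is correct and follows essentially the same argument as the paper: it too parametrizes $\AB_n$ by the set $K'_n$ via Proposition~\ref{coxeter-arc-B}, linearizes $\fmaj$ using~\eqref{eq-coxeter-fmaj_B}, and factors the sum over $\mathbf{k}$ using multiplicativity of $\chi$ to obtain $[2n]_{\chi(c_{n-1})q}\prod_{i=0}^{n-2}(1+\chi(c_i)q^{2i+1})$. If anything, you go slightly further than the paper's printed proof by explicitly computing $\chi(c_i)$ in each of the four cases, which the paper leaves implicit.
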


\begin{proof}
Let $\chi$ be a one-dimensional character of $B_n$. Let $K'_n:=\{\mathbf{k}=(k_0,k_1,\dots,k_{n-1}):\ 0\le
k_{n-1}\le 2n-1,\ k_i\in\{0,2i+1\}\mbox{ for }0\le i\le n-2\}$.

By Proposition~\ref{coxeter-arc-B} and Equation \eqref{eq-coxeter-fmaj_B},
\begin{multline*}
\sum\limits_{\pi\in \AB_n} \chi(\pi) q^{\fmaj(\pi)}=
\sum\limits_{\mathbf{k}\in K'_n} \chi(c_{n-1}^{k_{n-1}}\cdots
c_{1}^{k_{1}})q^{\fmaj(c_{n-1}^{k_{n-1}}\cdots c_{0}^{k_{0}})}
=\sum\limits_{\mathbf{k}\in K'_n} \chi(c_{n-1}^{k_{n-1}}\cdots c_{0}^{k_{0}})q^{\sum k_i}\\
 =\sum\limits_{\mathbf{k}\in K'_n} \prod\limits_{i=0}^{n-1}\chi(c_{i})^{k_{i}}\,q^{\sum k_i}
=\sum\limits_{\mathbf{k}\in K'_n} \prod\limits_{i=0}^{n-1}(\chi(c_{i})q)^{k_i} =
\sum\limits_{k_{n-1}=0}^{2n-1}(\chi(c_{n-1})q)^{k_{n-1}}
\prod\limits_{i=0}^{n-2}(1+\chi(c_{i})q^{2i+1}).
\end{multline*}
\end{proof}

\begin{remark}{\rm
A characterization similar to Proposition~\ref{coxeter-arc-B} holds for signed arc permutations.
Letting $t_n$ be the reflection $[1,2,\dots,n-1,-n]$, one can show
that a permutation $\pi\in B_n$ is a signed arc permutation if and
only if
$$
\pi = (c_{n-1}c_0)^{k_n} t_n^{k_{n-1}}c_{n-2}^{k_{n-2}}\cdots
c_{1}^{k_{1}}c_0^{k_0},
$$
with $0\le k_{n}\le n-1$ and $k_i\in\{0,-1\}$ for all $0\le i\le
n-1$.
Unlike in the case of $B$-arc permutations, we did not find this characterization helpful in
computing enumerators.}
\end{remark}

\subsection{The $(\fdes,\fmaj)$-enumerator}\label{descents-B-arc}

Next we apply a coset analysis to calculate the
bivariate $(\fdes,\fmaj)$-enumerator on $B$-arc permutations.

\begin{theorem}\label{B-prop:fmaj-fdes}  For every $n\ge2$,
\begin{align}
\sum_{\pi\in\AB_n}t^{\fdes(\pi)}q^{\fmaj(\pi)}&=\frac{(1+tq)(1+tq^n)}{1-q}\left((1-tq^n)\prod_{i=1}^{n-2}(1+t^2q^{2i+1})-(1-t)q\prod_{i=1}^{n-2}(1+t^2q^{2i+2})\right),\label{eq:fmaj-fdes}\\
\sum_{\pi\in\AB_n}t^{\fdes(\pi)}&=(1+t)^3 (1+t^2)^{n-3} (1+(n-2)t+t^2).\label{eq:fdes}
\end{align}
\end{theorem}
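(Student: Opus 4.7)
The plan is to combine the canonical form from Proposition~\ref{coxeter-arc-B} with a coset analysis with respect to the cyclic subgroup $\langle c_{n-1}\rangle$. Every $\pi\in\AB_n$ factors uniquely as $\pi=c_{n-1}^{k_{n-1}}c_{n-2}^{k_{n-2}}\cdots c_0^{k_0}$ with $k_{n-1}\in\{0,1,\ldots,2n-1\}$ and $k_i\in\{0,2i+1\}$ for $0\le i\le n-2$, with $\fmaj(\pi)=\sum_{i=0}^{n-1}k_i$ by~\eqref{eq-coxeter-fmaj_B}. This partitions $\AB_n$ into $2^{n-1}$ left cosets of $\langle c_{n-1}\rangle$ of size $2n$, so
\[
\sum_{\pi\in\AB_n} t^{\fdes(\pi)}q^{\fmaj(\pi)}=\sum_{v} q^{\fmaj(v)}\sum_{k=0}^{2n-1}t^{\fdes(c_{n-1}^k v)}q^k,
\]
where $v=c_{n-2}^{k_{n-2}}\cdots c_0^{k_0}$ ranges over the $2^{n-1}$ choices.

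I would first peel off the factor $c_0^{k_0}$. Multiplying on the right by $c_0$ negates only $\pi'(1)$. A direct case analysis, exploiting the fact that in the canonical form $\pi'=c_{n-1}^{k_{n-1}}\cdots c_1^{k_1}$ the entries $\pi'(1)$ and $\pi'(2)$ are at $\O_n$-distance either $1$ or $n-1$ (depending on whether $c_1^{-1}$ appears in $\tilde v=c_{n-2}^{k_{n-2}}\cdots c_1^{k_1}$), verifies that this operation always increases $\fdes$ by exactly $1$. This yields the factor $(1+tq)$ of~\eqref{eq:fmaj-fdes} and reduces the problem to evaluating $\sum_{\tilde v} q^{\fmaj(\tilde v)}G_{\tilde v}(t,q)$, where $G_{\tilde v}(t,q)=\sum_{k=0}^{2n-1}t^{\fdes(c_{n-1}^k\tilde v)}q^k$ and $\tilde v$ ranges over the $2^{n-2}$ products $c_{n-2}^{k_{n-2}}\cdots c_1^{k_1}$.

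To evaluate $G_{\tilde v}(t,q)$, I view left-multiplication by $c_{n-1}$ as a one-step cyclic shift of the linear-order cut in $\O_n$ (between $n$ and $-1$). For fixed $\tilde v$, each consecutive pair $(\tilde v(i),\tilde v(i+1))$ contributes a descent at position $i$ precisely when the cut falls in an arc whose length is the clockwise $\O_n$-distance from $\tilde v(i+1)$ to $\tilde v(i)$, and $\delta(\pi(1)<0)$ is the indicator of a length-$n$ arc determined by $\tilde v(1)$. Assembling these arc-contributions with weight $q^k$ and summing over the $2^{n-2}$ binary choices, each factor $c_i^{\pm 1}$ with $i\in\{1,\ldots,n-2\}$ contributes either $1$ or a monomial of the form $t^2q^{2i+1}$ or $t^2q^{2i+2}$, depending on how its associated arcs interact with the cut. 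Grouping cases yields the two products $\prod_{i=1}^{n-2}(1+t^2q^{2i+1})$ and $\prod_{i=1}^{n-2}(1+t^2q^{2i+2})$, which combine via a telescoping sum, analogous to those in the proofs of Corollaries~\ref{arc_maj_des} and~\ref{signed_arc_fmaj}, with coefficients $(1-tq^n)$ and $-(1-t)q$ and prefactor $(1+tq^n)/(1-q)$, yielding~\eqref{eq:fmaj-fdes}. The main obstacle is this bookkeeping, which requires carefully matching each of the $2n$ cut-positions and each binary choice to the correct product. Finally, \eqref{eq:fdes} follows from \eqref{eq:fmaj-fdes} by substituting $q=1$: the bracket vanishes at $q=1$, cancelling the $1/(1-q)$ factor, and the resulting limit evaluates (via L'H\^opital or direct computation) to $(1+t)^3(1+t^2)^{n-3}(1+(n-2)t+t^2)$.
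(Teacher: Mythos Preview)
Your framework is the same as the paper's---coset analysis with respect to $\langle c_{n-1}\rangle$---and your observation that right-multiplication by $c_0$ increases $\fdes$ by exactly~$1$ on every representative $\pi'=c_{n-1}^{k_{n-1}}\cdots c_1^{k_1}$ (yielding the factor $(1+tq)$) is correct and is a nice variation not present in the paper. However, the heart of the argument---evaluating $\sum_{\tilde v} q^{\fmaj(\tilde v)}G_{\tilde v}(t,q)$---is not actually carried out. Your ``arc'' description of how descents behave under rotation is valid, but the assertion that ``each factor $c_i^{\pm1}$ contributes either $1$ or a monomial of the form $t^2q^{2i+1}$ or $t^2q^{2i+2}$'' is not justified: the location of each descent-arc on $\O_n$ depends on \emph{all} the binary choices simultaneously, not on the single choice $k_i$, so there is no a priori product structure. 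The subsequent ``telescoping'' is asserted, not demonstrated; you are essentially reading the answer off the statement rather than deriving it.

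The paper closes this gap by identifying the single parameter that governs $G_{\tilde v}$: for any coset representative $\pi\in\widetilde{\AB_n}=\{\pi\in\AB_n:\pi(n)=n\}$ with $|\pi(1)|=a$, one has the explicit piecewise formula
\[
\fdes(c_{n-1}^j\pi)=\fdes(\pi)+\begin{cases}0,&0\le j<a,\\ 1,&a\le j<n+a,\\ 2,&n+a\le j<2n,\end{cases}
\]
so that the coset sum equals $t^{\fdes(\pi)}q^{\fmaj(\pi)}\cdot\dfrac{1-t^2q^{2n}-(1-t)(1+tq^n)q^a}{1-q}$. This reduces everything to the trivariate polynomial $P(t,q,y)=\sum_{\pi\in\widetilde{\AB_n}} t^{\fdes(\pi)}q^{\fmaj(\pi)}y^{|\pi(1)|}$, which the paper computes in closed form as $y(1+tq)\prod_{i=1}^{n-2}(1+yt^2q^{2i+1})$ by a direct right-to-left construction of $\pi$. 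The final formula is then simply $\bigl((1-t^2q^{2n})P(t,q,1)-(1-t)(1+tq^n)P(t,q,q)\bigr)/(1-q)$, with no telescoping needed. If you want to salvage your route, the missing step is precisely to compute how $|\tilde v(1)|$ (equivalently, the position of the first-entry arc) depends on the binary word $(k_1,\ldots,k_{n-2})$; once you do that you will recover the paper's $P(t,q,y)$ and the rest follows.
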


\begin{proof}
Fix $n$, and let $c:=c_{n-1}=[-n,1,2,\dots,n-1]=\sigma_{n-1}\sigma_{n-2}\cdots \sigma_0$, which is a
Coxeter element in $B_n$, and it has order $2n$. Recall that $\AB_n$ is
closed under left multiplication by $c$, which corresponds to shifting the values of $\pi$ one
position counterclockwise in $\O_n$. A collection of
representatives of the distinct left cosets of the cyclic subgroup
generated by $c$ is given by $\{\pi\in \AB_n: \pi(n)=n\}$.
Denoting this set by $\wt{\AB_n}$, we can write $\AB_n$ as a
disjoint union
$$
\AB_n=\bigcup_{j=0}^{2n-1} \{c^j\pi: \pi \in \wt{\AB_n}\}.
$$

Before proving Equation~\eqref{eq:fmaj-fdes}, we start with the case $t=1$ to illustrate our technique. This is Equation~\eqref{eq:fmaj}, which we proved above using a different method.
We first show that
$$
\sum_{\pi\in\wt{\AB_n}} q^{\fmaj(\pi)}=\prod\limits_{i=1}^{n-1}
(1+q^{2i-1}).
$$
Indeed, for every $1\le i< n$, given a suffix of $n-i$ letters,
which is an interval containing $n$, there are two choices for the preceding letter $\pi(i)$:
 positive and maximal among the remaining letters, or negative and minimal. In the first case, $\pi(i-1)$ must be smaller than $\pi(i)$ and the
contribution to the flag-major index is zero. In the second case,
since $\pi(i)$ is negative and minimal among the remaining
letters, $i-1$ must be a descent, and the contribution to the
flag-major index is $2(i-1)+1$.

It is easy to verify that for every $\pi\in \AB_n$ and
$0\le j<2n$, \beq\label{eq:fmajcpi} \fmaj(c^j \pi)= \fmaj(\pi)+j.
\eeq One concludes that
$$
\sum_{\pi\in\AB_n}q^{\fmaj(\pi)}=[2n]_q \sum_{\pi\in\wt{\AB_n}}
q^{\fmaj(\pi)},
$$
which implies equation~\eqref{eq:fmaj}.

Refining the above argument, we can enumerate permutations
$\pi\in\wt{\AB_n}$ with $\pi(1)>0$ according to the descent
set, the value of $\pi(1)$, and $\nega(\pi)$ as follows:
\begin{align*}
\sum_{\{\pi\in \wt{\AB_n}:\ \pi(1)>0\}} \x^{\D(\pi)}
y^{\pi(1)} z^{\nega(\pi)}&= y^{n-1}
\left(x_{n-2}z+\frac{1}{y}\right)\left(x_{n-3}z+\frac{1}{y}\right)\dots\left(x_{1}z+\frac{1}{y}\right)\\
&=y\prod_{i=1}^{n-2}(1+x_iyz).
\end{align*}
To see this, let $2\le i< n$, and suppose that the entries
$\pi(i+1),\pi(i+2),\dots,\pi(n)$ have been chosen, forming an
interval in $\O_n$ containing $n$. Suppose that this interval is
bounded by $-k<0$ and $m>0$. There are two choices for the entry
$\pi(i)$, namely $-k-1$ and $m-1$. If $\pi(i)=-k-1$, then
$\pi(i-1)\pi(i)$ will be a descent, regardless of how $\pi(i-1)$
is chosen, and additionally $\pi(i)$ contributes to $\nega(\pi)$.
On the other hand, if $\pi(i)=m-1$, then $\pi(i-1)\pi(i)$ will not
be a descent. Finally, there is only one choice for $\pi(1)$ once
$\pi(2),\pi(3),\dots,\pi(n)$ have been chosen, since $\pi(1)>0$,
and its value will be $n-1$ minus the number of indices $2\le i<
n$ for which the positive choice for $\pi(i)$ has been made.

Similarly, for permutations $\pi\in\wt{\AB_n}$ with
$\pi(1)<0$, we get
$$
\sum_{\{\pi\in \wt{\AB_n}:\ \pi(1)<0\}} \x^{\D(\pi)}
y^{|\pi(1)|} z^{\nega(\pi)}=yz\prod_{i=1}^{n-2}(1+x_iyz),
$$
and so
$$
\sum_{\pi\in \wt{\AB_n}} \x^{\D(\pi)} y^{|\pi(1)|}
z^{\nega(\pi)}
u^{\delta(\pi(1)<0)}=y(1+uz)\prod_{i=1}^{n-2}(1+x_iyz).
$$
Making the substitutions $x_i=t^2q^{2i}$, $z=q$ and $u=t$, we
obtain
$$
P(t,q,y):=\sum_{\pi\in \wt{\AB_n}} t^{\fdes(\pi)}
q^{\fmaj(\pi)} y^{|\pi(1)|}
=y(1+tq)\prod_{i=1}^{n-2}(1+yt^2q^{2i+1}).
$$

Given $\pi\in \wt{\AB_n}$ with $\pi(1)=a>0$, let us analyze
the values of $\fdes$ on the coset $\{c^j\pi: 0\le j<2n\}$. To see
how $\fdes$ changes when multiplying by $c$, note that
$\des(c\sigma)=\des(\sigma)$ unless $\sigma(1)=-1$, in which case
$\des(c\sigma)=\des(\sigma)+1$, or $\sigma(n)=-1$, in which case
$\des(c\sigma)=\des(\sigma)-1$. Thus,
$$\des(c^j\pi)=\begin{cases} \des(\pi) & \mbox{if }0\le j<n+a, \\
\des(\pi)+1 & \mbox{if }n+a\le j<2n. \end{cases}$$ Since
$c^j\pi(1)<0$ precisely for $a\le j<n+a$, it follows that
\begin{equation}\label{eq:fdescpi}
\fdes(c^j\pi)=\begin{cases} \fdes(\pi) & \mbox{if }0\le j<a, \\
\fdes(\pi)+1 & \mbox{if }a\le j<n+a, \\
\fdes(\pi)+2 & \mbox{if }n+a\le j<2n. \end{cases}\end{equation}

Similarly, given $\pi\in \wt{\AB_n}$ with $\pi(1)=a<0$, we
have
$$\des(c^j\pi)=\begin{cases} \des(\pi) & \mbox{if }0\le j<a, \\
\des(\pi)+1 & \mbox{if }a\le j<n+a, \\
\des(\pi) & \mbox{if }n+a\le j<2n, \end{cases}$$ and since
$c^j\pi(1)<0$ precisely when $0\le j<a$ or $n+a\le j<2n$, the same
formula~\eqref{eq:fdescpi} for $\fdes(c^j\pi)$ holds.

Using equations~\eqref{eq:fdescpi} and~\eqref{eq:fmajcpi}, we see
that if the contribution of $\pi\in \wt{\AB_n}$ to the
generating function $P(t,q,y)$ is $t^{\fdes(\pi)} q^{\fmaj(\pi)}
y^{|\pi(1)|}=t^d q^m y^a$, then the contribution of the coset
$\{c^j\pi: 0\le j<2n\}$ to the generating function
$\sum_{\pi\in\AB_n} t^{\fdes(\pi)}q^{\fmaj(\pi)}$ is
\begin{multline*}t^d q^m(1+ q+\dots+q^{a-1}+t q^{a}+t q^{a+1}+\dots+t q^{n+a-1}+ t^2 q^{n+a}+t^2 q^{n+a+1}+\dots+t^2 q^{2n-1})\\
=t^dq^m([a]_q+tq^a[n]_q+t^2q^{n+a}[n-a]_q)=t^d q^m
\frac{1-t^2q^{2n}-(1-t)(1+tq^n)q^a}{1-q}.
\end{multline*}
It follows that
\begin{align*}\sum_{\pi\in\AB_n} t^{\fdes(\pi)}q^{\fmaj(\pi)}&=
\frac{(1-t^2q^{2n})P(t,q,1)-(1-t)(1+tq^n)P(t,q,q)}{1-q}\\
&=\frac{(1+tq)(1+tq^n)}{1-q}\left((1-tq^n)\prod_{i=1}^{n-2}(1+t^2q^{2i+1})-(1-t)q\prod_{i=1}^{n-2}(1+t^2q^{2i+2})\right),
\end{align*}
proving~\eqref{eq:fmaj-fdes}.

When $q=1$, it is easy to realize that if the contribution of a
permutation $\pi\in \wt{\AB_n}$ to $P(t,1,y)$ is $t^d y^a$,
then the contribution of the coset $\{c^j\pi: 0\le j<2n\}$ to
$\sum_{\pi\in\AB_n} t^{\fdes(\pi)}$ is
$$t^d(a+t n+t^2 (n-a))=t^d(a(1-t^2)+nt(1+t)).$$
It follows that
\begin{align*}\sum_{\pi\in\AB_n} t^{\fdes(\pi)}&=
(1-t^2)\frac{\partial}{\partial y}P(t,1,y)\bigg|_{y=1}+nt(1+t)P(t,1,1)\\
&=(1-t^2)(1+t)(1+t^2)^{n-3}(1+(n-1)t^2)+nt(1+t)^2(1+t^2)^{n-2}\\
&=(1+t)^2(1+t^2)^{n-3}\left((1-t)(1+(n-1)t^2)+nt(1+t^2)\right)\\
&=(1+t)^3(1+t^2)^{n-3}(1+(n-2)t+t^2),
\end{align*}
proving~\eqref{eq:fdes}.

\end{proof}

\subsection{The descent set enumerator}


In this subsection we apply a descent-set preserving map
to reduce the calculation of the descent set enumerator on $B$-arc
permutations to the type $A$ case.

\begin{theorem}\label{Thm_AB_Des}
For every $n\ge2$, \begin{equation}\label{eq_Des}
\sum_{\pi\in\AB_n} \x^{\D(\pi)}=
\prod\limits_{i=1}^{n-1}(1+x_i)\left(2+n+
2\sum_{i=1}^{n-2}\frac{x_i+x_{i+1}}{(1+x_i)(1+x_{i+1})}\right).
\end{equation}
\end{theorem}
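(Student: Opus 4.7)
The plan is to reduce the computation to the type $A$ case (equation~\eqref{eq:A-Des}) via a descent-set preserving surjection $\psi\colon \AB_n \to \A_n$. Define $\psi(\pi)$ to be the \emph{standardization} of $\pi$: the unique $\sigma \in \S_n$ such that $\sigma(i)$ is the rank of $\pi(i)$ among the entries of $\pi$ in the order $-1 < -2 < \cdots < -n < 1 < \cdots < n$. Standardization automatically preserves the descent set, so $\D(\psi(\pi)) = \D(\pi)$. To check $\psi(\pi) \in \A_n$, I would observe that $\{\pi(1),\dots,\pi(n)\}$ is a length-$n$ arc $A$ of $\O_n$, and the linear order on $A$ is a cyclic rotation of the clockwise order on $A$ by a shift $p(A)\in\{0,1,\dots,n-1\}$ (with $p(A)=0$ exactly when $A$ avoids the cut between $n$ and $-1$). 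Since each suffix of $\pi$ is a sub-arc of $A$, its rank-image is a cyclic block of $\{1,\dots,n\}$, i.e., an interval of $\bbz_n$.

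Next I would compute $|\psi^{-1}(\sigma)|$. A preimage of $\sigma$ is specified by the choice of an arc $A$ together with the rank-based placement $\pi(i)=$ (element of $A$ with rank $\sigma(i)$). This $\pi$ lies in $\AB_n$ iff, for each $j$, the cyclic interval $I_j := \{\sigma(j),\dots,\sigma(n)\}$ shifted by $p=p(A)$ is a linear (non-wrapping) interval in $\{1,\dots,n\}$. Since $I_j\subseteq I_2$ for $j\ge 3$, the binding constraint is at $j=2$: $I_2+p$ has one-point gap $\{\sigma(1)+p\pmod n\}$, and $I_2+p$ is linear iff this gap equals $1$ or $n$, giving the two-element set $P_\sigma = \{n-\sigma(1),\,1-\sigma(1)\}\pmod n$. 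Combined with the arc count (arcs with $p(A)=0$ number $n+1$, while each nonzero $p$ corresponds to a unique cut-crossing arc), I obtain $|\psi^{-1}(\sigma)|=n+2$ if $\sigma(1)\in\{1,n\}$ and $2$ otherwise, since $0\in P_\sigma$ iff $\sigma(1)\in\{1,n\}$.

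Summing fibers gives
\[
\sum_{\pi\in\AB_n}\x^{\D(\pi)} \;=\; 2\sum_{\sigma\in\A_n}\x^{\D(\sigma)} \;+\; n\!\!\!\!\sum_{\substack{\sigma\in\A_n\\\sigma(1)\in\{1,n\}}}\!\!\!\!\x^{\D(\sigma)}.
\]
To finish I would prove the auxiliary identity $\sum_{\sigma\in\A_n,\,\sigma(1)\in\{1,n\}}\x^{\D(\sigma)} = \prod_{i=1}^{n-1}(1+x_i)$. A permutation $\sigma\in\A_n$ with $\sigma(1)\in\{1,n\}$ is uniquely determined by $\sigma(1)$ together with a sequence of $n-2$ CW/CCW extension directions in $\bbz_n$ (the last step being forced); the descent at position $1$ is absent iff $\sigma(1)=1$, and a short case analysis (parallel to the $\wt{\AB_n}$ analysis from Theorem~\ref{B-prop:fmaj-fdes}) shows that the remaining descents are freely controlled by the extension choices, giving a bijection with $2^{\{1,\dots,n-1\}}$. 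Substituting this and equation~\eqref{eq:A-Des} into the display yields the stated formula. The main technical hurdle is the fiber count, which hinges on translating ``a cyclic interval of $\bbz_n$ becomes a linear interval of $\{1,\dots,n\}$ after shifting'' into the single-point gap condition for $I_2$.
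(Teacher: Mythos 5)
Your argument is correct, and while it shares the paper's overall strategy---a descent-set-preserving reduction to the type $A$ formula \eqref{eq:A-Des}---the bookkeeping is genuinely different. The paper splits $\AB_n$ according to whether the window contains $1$ or $-1$: the $-1$-part maps $2$-to-$1$ onto $\A_n$ via $\pi\mapsto|\pi|$, while the $1$-part splits into $n$ support classes $B_{n,k}$, each in descent-preserving bijection with $\LL_n$, yielding $n\prod_{i}(1+x_i)+2\sum_{\sigma\in\A_n}\x^{\D(\sigma)}$ directly. You instead use one unified standardization map $\psi$ onto $\A_n$ and compute its fibers; the two pictures are reconciled by noting that the $2n$ arcs split into the $n$ arcs containing $-1$, which realize every shift $p\in\bbz_n$ exactly once (these give the uniform contribution $|P_\sigma|=2$, and on them $\psi$ coincides with $|\cdot|$), and the $n$ arcs containing $1$, all with shift $0$ (these contribute $n$ extra preimages precisely when $0\in P_\sigma$, i.e.\ $\sigma(1)\in\{1,n\}$). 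I checked your fiber count against $|\AB_n|=n2^n$ and against small cases; it is right. Your route does require one auxiliary identity absent from the paper, $\sum_{\sigma\in\A_n,\,\sigma(1)\in\{1,n\}}\x^{\D(\sigma)}=\prod_{i=1}^{n-1}(1+x_i)$, which plays the role of the paper's $\LL_n$ computation and does hold: for $\sigma(1)=1$, position $1$ is never a descent and position $j\ge 2$ is a descent iff $\sigma(j)$ extends the prefix interval at its lower end, so the descent sets run over all subsets of $\{2,\dots,n-1\}$; the case $\sigma(1)=n$ follows by the complementation $\sigma\mapsto n+1-\sigma$, which complements descent sets. Two small points you should write out: (i) in ``the binding constraint is at $j=2$'' you need the easy observation that a cyclic interval of $\bbz_n$ contained in a \emph{proper} non-wrapping interval cannot itself wrap, so non-wrapping of $I_2+p$ propagates to all $I_j+p$ with $j\ge 3$; (ii) every length-$n$ arc of $\O_n$ contains exactly one of each antipodal pair $\{k,-k\}$, so each arc really does parametrize exactly one candidate preimage. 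What your approach buys is a single map with a clean cyclic-interval fiber analysis; what the paper's buys is avoiding the fiber computation entirely.
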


\begin{proof}
We show that there exists an $n$-to-$1$ descent-set
preserving map from the subset of permutations in $\AB_n$ which
contain the letter $1$ to $\LL_n$, and
a $2$-to-$1$ descent-set preserving map from permutations in $\AB_n$
which contain the letter $-1$ to $\A_n$.

For $1\le k\le n$, denote by $B_{n,k}$ the set of permutations in $\AB_n$ whose support
is $-k+1,-k+2\dots,-n,1,\dots,k$. Note that $\bigcup_{1\le k\le n}B_{n,k}$ is the
subset of permutations in $\AB_n$ which contain the entry $1$.
Permutations $\pi\in B_{n,k}$ are determined by choosing, for $1\le i\le n-1$, whether $\pi(i)$
is the largest or the smallest of the remaining entries. Clearly, $\pi(i)$ creates a descent with $\pi(i+1)$ only in the first case. It follows that
\beq\label{eq:Bnk}
\sum_{\pi\in B_{n,k}}
\x^{\D(\pi)}=\prod\limits_{i=1}^{n-1}(1+x_i),
\eeq
which, as shown in the proof of Theorem~\ref{thm:A-invDes}, coincides with the descent set enumerator on $\LL_n$. In fact, this construction gives a natural descent-set preserving bijection from $B_{n,k}$ to $\LL_n$,
and thus an $n$-to-$1$ descent-set preserving map from permutations in $\AB_n$ which contain $1$ to $\LL_n$.

Next we describe a $2$-to-$1$ descent-set preserving map from permutations
in $\AB_n$ which contain $-1$ to $\A_n$. The image of $\pi$ is simply defined to be $|\pi|$, that is,
the permutation obtained by forgetting the signs. It is easy to check that $|\pi|\in\A_n$ and that this map preserves the descent set.

To see that it is a $2$-to-$1$ map, we show that each permutation $[a_1, a_2,\dots, a_n]\in\A_n$ has
exactly two preimages. If $a_1\ne 1$, the preimages are $[a_1,a_2',a_3',\dots,a_n']$ and $[-a_1,a_2',a_3',\dots,a_n'$], where
$$a_i'=\begin{cases} a_i & \mbox{if } a_i>a_1,\\ -a_i & \mbox{otherwise.}\end{cases}$$
If $a_1=1$, the preimages are $[-1,a_2,\dots,a_n]$ and $[-1,-a_2,\dots,-a_n]$.

Combining Equation \eqref{eq:A-Des}, which gives the distribution of the descent set on $\A_n$, with
Equation~\eqref{eq:Bnk}, we conclude that
$$
\sum_{\pi\in\AB_n} \x^{\D(\pi)}= n\sum_{\pi\in\LL_n}
\x^{\D(\pi)}+2\sum_{\pi\in\A_n} \x^{\D(\pi)}= n
\prod\limits_{i=1}^{n-1}(1+x_i)+2\prod\limits_{i=1}^{n-1}(1+x_i)
\left(1+\sum_{i=1}^{n-2}\frac{x_i+x_{i+1}}{(1+x_i)(1+x_{i+1})}\right),
$$
which equals the right-hand side of \eqref{eq_Des}.
\end{proof}

\section{Final remarks and open problems}\label{Final}

Comparing Theorem~\ref{signed_Des_Neg_inv} with
Theorem~\ref{Thm_AB_Des}, we see that the descent set has
different distributions on $\As_n$ and $\AB_n$.
However, combining Theorem~\ref{B-enumerators} with
Corollary~\ref{fmaj_signed_enumerate_s_arc}, we obtain the following equidistribution phenomena.
It would be natural to look for bijective proofs.

\begin{corollary}\label{equi-B_vs_signed}
\begin{enumerate}
\item For every $n\ge 1$,
$$
\sum\limits_{\pi\in \AB_n} q^{\fmaj(\pi)}=\sum\limits_{\pi\in
\As_n} q^{\fmaj(\pi)}.
$$
\item For every even $n\ge2$,
$$
\sum\limits_{\pi\in \AB_n} \sgn(\pi)
q^{\fmaj(\pi)}=\sum\limits_{\pi\in \As_n} \sgn(\pi)
q^{\fmaj(\pi)}.
$$
\end{enumerate}
\end{corollary}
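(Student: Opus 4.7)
The plan is essentially to verify the corollary by direct inspection of the formulas already established, since both generating functions on each side have been computed in closed form. No new combinatorial argument is needed; the content is the observation that the two product formulas coincide.

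For part (1), I would set the unsigned $\fmaj$-enumerators for $\AB_n$ and $\As_n$ side by side. Equation~\eqref{eq:fmaj} of Theorem~\ref{B-enumerators} gives
\[
\sum_{\pi\in \AB_n} q^{\fmaj(\pi)} = [2n]_q \prod_{i=1}^{n-1}(1+q^{2i-1}),
\]
while Equation~\eqref{eq:signed_fmaj} of Corollary~\ref{fmaj_signed_enumerate_s_arc} gives exactly the same right-hand side for $\As_n$. So part (1) is immediate.

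For part (2), I would invoke the signed $\fmaj$-enumerator in Theorem~\ref{B-enumerators},
\[
\sum_{\pi\in \AB_n} \sgn(\pi)\, q^{\fmaj(\pi)} = [2n]_{(-1)^n q} \prod_{i=1}^{n-1}(1+(-1)^i q^{2i-1}),
\]
and compare it with the even-$n$ case of Equation~\eqref{eq:signed-signfmaj} in Corollary~\ref{fmaj_signed_enumerate_s_arc}, which reads
\[
\sum_{\pi\in \As_n} \sgn(\pi)\, q^{\fmaj(\pi)} = [2n]_q \prod_{i=1}^{n-1}(1+(-1)^i q^{2i-1}).
\]
When $n$ is even, $(-1)^n=1$, so $[2n]_{(-1)^n q}=[2n]_q$, and the two expressions agree.

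There is essentially no obstacle: everything has already been done in Theorem~\ref{B-enumerators} and Corollary~\ref{fmaj_signed_enumerate_s_arc}, and the only observation to record is the parity simplification in part (2). The more interesting question, as the authors remark, is to find bijective explanations of these two equidistributions, and also to understand why the signed-$\fmaj$ equidistribution fails for odd $n$ (where the formulas in Corollary~\ref{fmaj_signed_enumerate_s_arc} and Theorem~\ref{B-enumerators} genuinely differ, the former acquiring a factor $(1-q)[n]_{-q^2}$ in place of $[2n]_{-q}$). I would mention this parity contrast explicitly in the proof as a small sanity check, but no additional calculation is required.
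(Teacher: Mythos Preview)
Your proposal is correct and matches the paper's approach exactly: the corollary is stated as an immediate consequence of comparing the closed-form formulas in Theorem~\ref{B-enumerators} and Corollary~\ref{fmaj_signed_enumerate_s_arc}, with the only observation being that $(-1)^n=1$ when $n$ is even. The paper does not give any further argument.
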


\bigskip

Signed arc permutations and $B$-arc permutations have further properties
analogous to those of unsigned arc permutations.
In particular, both sets carry affine Weyl group actions, interesting
underlying graph structures, and descent-set preserving maps to
standard Young tableaux.
Whereas the definition of $B$-arc permutations is more natural and
gives rise to a nicer underlying graph structure, signed arc
permutations have a finer joint distribution of the descent set
and the set of negative entries, which leads to interesting
quasi-symmetric functions of type $B$ to be discussed in a
forthcoming paper.

\bigskip

We conclude by mentioning a natural direction in which our work could be extended.
The flag-major index and flag-descent number have been generalized
to classical complex reflection groups in~\cite{C1, BC, BB, SAR}.

\begin{problem}
Generalize the concept of arc permutations to the 
complex reflection group $G(r,p,n)$.
\end{problem}

Finding elegant descent enumerators on these generalized arc permutations
may serve as an indicator of a ``correct'' generalization. It should be noted that
natural analogues of Equation \eqref{eq-coxeter-maj} and
Proposition~\ref{coxeter-arc} hold on wreath products
$G(r,1,n)=\bbz_r\wr \S_n$. Thus, enumerators on
$B$-arc permutations 
could be generalized to all one-dimensional character enumerators for the
flag-major index on these sets.

A more challenging task is to find a unified abstract
generalization of arc permutations to all Coxeter groups,
including 
affine as well as exceptional types.

\end{document}